\documentclass[journal,twoside]{IEEEtran}

\usepackage{enumitem}
\usepackage[utf8]{inputenc}
\usepackage{graphicx}
\usepackage{textcomp}
\usepackage{xcolor}
\usepackage{float}
\usepackage{comment}
\usepackage{algorithmic}
\usepackage{algorithm}
\usepackage{amsmath,amssymb}
\usepackage{cite}
\usepackage{tikz}
\usetikzlibrary{arrows.meta,positioning,calc,decorations.pathreplacing,patterns,shapes.geometric,shapes.misc,backgrounds}

\usepackage{style} 
\usepackage{mathtools}
\usepackage{xpatch}

\usepackage[numbered]{bookmark}
\usepackage{hyperref} 

\providecommand{\Du}{\mathfrak{D}}                     
\providecommand{\Lf}[2]{L_{#1} #2}                     
\DeclareMathOperator*{\lexmin}{lex\text{-}min}
\newcommand{\minRe}{\min\nolimits_{\Re}}               
\newcommand{\maxRe}{\max\nolimits_{\Re}}               

\allowdisplaybreaks
\def\BibTeX{{\rm B\kern-.05em{\sc i\kern-.025em b}\kern-.08em
		T\kern-.1667em\lower.7ex\hbox{E}\kern-.125emX}}

\begin{document}
	\IEEEoverridecommandlockouts
	\IEEEpubid{\makebox[\columnwidth]{This work has been submitted to the IEEE for possible publication.\hfill}\hspace{\columnsep}\makebox[\columnwidth]{ }}
	
	\title{Deterministic Non-Smooth Safety \\via Dual-Algebraic Control Barrier Functions}
	
	\author{Mohammadreza Kamaldar
		\thanks{Manuscript received June 5, 2026. (Corresponding author: Mohammadreza Kamaldar.)}
		\thanks{Mohammadreza Kamaldar is with the Department of Mechanical, Aerospace, and Biomedical Engineering, University of South Alabama, Mobile, AL 36688 USA (e-mail: mkamaldar@southalabama.edu).}}
	
	\maketitle

	\begin{abstract}
		This paper presents a dual-algebraic framework for control barrier functions (CBFs) that guarantees deterministic execution using exclusively elementary arithmetic. We develop this deterministic approach to solve a fundamental bottleneck in safety-critical control: pointwise minima compose intersecting safe sets, but generate non-smooth boundaries where standard Lie derivatives fail. Existing mathematical workarounds inject approximation bias, probabilistic non-determinism, or combinatorial execution delays that impede hard real-time hardware certification. By embedding the system state and vector field into the dual-number ring, our method extracts both the composite barrier value and its exact directional derivative in a single evaluation. The standard floating-point minimum deterministically isolates a single vertex of the Clarke generalized gradient for the quadratic-program solver. We prove this selected vertex constitutes a Clarke subgradient and the resulting simultaneous-enforcement safety filter guarantees forward invariance. The arithmetic overhead remains a fixed constant factor, independent of state dimension and constraint count. We extend this framework to finite $\min$/$\max$ Boolean compositions, for which enforcement of the routed constraint of each $\delta$-active clause guarantees forward invariance, and to systems of higher relative degree, for which a bivariate truncated-dual evaluation extracts the control coupling without symbolic differentiation. Three numerical examples illustrate the computational scaling.
	\end{abstract}
	\IEEEpubidadjcol
	\begin{IEEEkeywords}
		Control barrier functions, non-smooth analysis, dual numbers, Clarke subdifferential, safety-critical control, forward invariance.
	\end{IEEEkeywords}
	
	\section{Introduction}
	\label{sec:intro}
	
	Safety-critical autonomous systems must guarantee that the state remains within a prescribed safe region at all times.
	The reachability and set-invariance literature provides foundational tools for characterizing safe sets and synthesizing controllers that enforce state constraints, including Hamilton-Jacobi reachability for hybrid systems \cite{lygeros1999reachability}, viability theory \cite{aubin2009viability}, and set-invariance methods \cite{blanchini1999set}.
	Control barrier functions (CBFs), introduced in \cite{wieland2007constructive} and formalized as quadratic-program-based safety controllers in \cite{ames2017cbf,ames2019cbf_review}, with roots in barrier-certificate \cite{prajna2007framework} and barrier-Lyapunov \cite{tee2009barrier} formulations, embed forward invariance of a safe set into a real-time quadratic program (QP) \cite{boyd2004convex} by means of a pointwise affine inequality.
	CBF-based quadratic programs have been applied to adaptive cruise control \cite{ames2014cbf}, bipedal locomotion \cite{hsu2015control}, multi-robot coordination \cite{wang2017safety}, and robot manipulation \cite{rauscher2016constrained}, and have been extended to high relative degree \cite{nguyen2016exponential,xiao2019high,xiao2022high}, input-to-state safety \cite{kolathaya2018input}, robustness \cite{xu2015robustness}, temporal-logic tasks \cite{lindemann2019control}, and sampled-data implementation \cite{breeden2021control}.
	When the barrier function $h$ is continuously differentiable, this inequality involves the Lie derivative $\Lf{f}{h}(x) \triangleq \nabla h(x) \cdot f(x)$ along the system vector field, which is well defined at every point.
	
	In practice, however, the safe region is most often described as the intersection of $p$ smooth constraint sets, and the natural composite barrier is the pointwise minimum
	\begin{equation}
		h(x) \triangleq \min_{i \in \{1,\ldots,p\}} h_i(x),\nn
		\label{eq:intro_min}
	\end{equation}
	where each $h_i$ is continuously differentiable. Although $h$ is locally Lipschitz, it fails to be differentiable at every state where two or more constraints attain the minimum, and the classical gradient $\nabla h(x)$ is undefined at such points. The standard CBF formulation cannot therefore be applied directly along trajectories that cross the non-smooth locus.
	
	Existing approaches circumvent this obstruction through smooth approximations, stochastic formulations, or exact geometric formulations, and each carries a practical penalty with respect to the design objectives of this paper. Smooth surrogates replace the exact minimum with a differentiable under-approximation, such as the log-sum-exp function $h_\tau(x) \triangleq -\tau \ln \sum_{i=1}^p e^{-h_i(x)/\tau}$, which converges to $h(x)$ as $\tau \to 0^+$ and is used with $\tau = 1$ in \cite{lindemann2019control}. Tuning the parameter $\tau$ involves a tradeoff: small values produce numerical ill-conditioning, while large values shift the represented boundary inward and enlarge the approximation error. Randomized smoothing replaces the non-smooth function by its expectation under a random perturbation and estimates the gradient of that expectation by sampling \cite{duchi2012randomized}, so the resulting gradient is a stochastic estimate. Stochastic CBF formulations model the dynamics as stochastic differential equations and certify invariance with probability one \cite{vahs2024nonsmooth}; these formulations address model uncertainty, and their certificates are stated for a stochastic execution model rather than the deterministic one required here. Exact formulations confront the geometry directly and carry computational or feasibility considerations of their own. Enforcing one constraint per component function at every control cycle \cite{glotfelter2017nonsmooth} removes active-set identification, and the constraint count of the quadratic program then scales with $p$ together with the feasibility requirement. Mixed-integer quadratic programs (MIQPs) select, through binary variables, which constraints of a disjunction to enforce \cite{chen2018obstacle}, and their worst-case solve time grows exponentially with the number of binary variables. Constructing the Clarke generalized gradient $\partial h(x)$ \cite{clarke1990optimization} provides the exact directional derivatives and requires the convex hull of the active gradients; the hull of $p$ points in $\BBR^n$ has up to $O(p^{\lfloor n/2 \rfloor})$ facets, and hull algorithms carry a corresponding worst-case cost \cite{barber1996quickhull}, so the execution time fluctuates with the local boundary geometry, which conflicts with the bounded-time requirements of safety standards such as DO-178C.

	Dual-algebraic evaluation bypasses these limitations. Evaluating the composite barrier within the dual-number ring $\BBD \triangleq \BBR[\epsilon]/(\epsilon^2)$ extracts the value $h(x)$ and the exact Lie derivative $\Lf{f}{h}(x)$ simultaneously in a single forward pass. While dual-number arithmetic is standard in algorithmic differentiation \cite{griewank2008evaluating}, its application as a deterministic engine for non-smooth safety-critical control is, to the best of our knowledge, new. When the dual-extended evaluation reaches a pointwise $\min$ of dual scalars, the standard floating-point comparison examines only the real parts, natively implementing a lexicographic order on $\BBD$. At every state, including states on the non-smooth locus, this comparison deterministically routes a single active constraint and returns its exact Lie derivative. We prove this selected gradient is a vertex of the Clarke generalized gradient at $x$, and that simultaneous enforcement of the $\delta$-active constraints yields forward invariance of the safe set.

	This paper develops the dual-algebraic evaluation of a composite barrier $h \triangleq \min_{i\in\{1,\ldots,p\}} h_i$ and proves that a single evaluation returns both the value $h(x)$ and a vertex of the Clarke subdifferential at every $x \in \SD$, including non-smooth points (Theorem~\ref{thm:vertex_selection}). Enforcing this single vertex does not by itself guarantee safety; we prove instead that a quadratic program enforcing all $\delta$-active constraints simultaneously, with the dual-derived Lie derivatives, renders the safe set forward invariant (Theorem~\ref{thm:forward_invariance}). The framework extends to barriers formed by finite pointwise $\min$ and $\max$ compositions, for which the evaluation returns a generator of the active-gradient polytope (Theorem~\ref{thm:composition}) and for which, in conjunctive form, enforcement of the routed constraint of each $\delta$-active clause renders the safe set forward invariant (Theorem~\ref{thm:cnf_invariance}). Using truncated polynomial duals $\BBR[\epsilon]/(\epsilon^{r+1})$, it extends to systems of arbitrary relative degree, for which a single seeded dual evaluation produces the iterated Lie derivatives $L_f^j h$, $j\in\{1,\ldots,r\}$ (Theorem~\ref{thm:higher_rel_deg}), and a bivariate truncated-dual evaluation produces the control coupling $\Lf{G}{L_f^{r-1}h}$ (Proposition~\ref{prop:coupling}). Finally, the dual evaluation increases the operation count of evaluating $h$ by a fixed constant factor, independent of the state dimension $n$, the constraint count $p$, and the active-set cardinality $|\SA(x)|$ (Proposition~\ref{prop:complexity}), which yields the deterministic worst-case execution time required for hard real-time certification.

	The remainder of the paper is organized as follows. Section~\ref{sec:notation} fixes the notation and reviews CBFs and the Clarke subdifferential. Section~\ref{sec:problem} states the problem and the design objectives precisely. Section~\ref{sec:dual_algebra} develops the dual-number algebra. Section~\ref{sec:main_results} contains the main results on vertex selection and forward invariance. Section~\ref{sec:extensions} extends the framework to compositions and to higher relative degree. Section~\ref{sec:QP} presents the QP-based safety filter and the complexity analysis. Section~\ref{sec:examples} illustrates the framework with three examples. Section~\ref{sec:discussion} compares the approach with existing methods and discusses its embedded realization, and Section~\ref{sec:conclusion} concludes and identifies future directions.

	\section{Notation and Preliminaries}
	\label{sec:notation}
	
	Let $\BBN \triangleq \{0,1,2,\ldots\}$ and $\BBZ_+ \triangleq \BBN \setminus \{0\}$. Let $\BBR$ denote the set of real numbers, and let $\|\cdot\|$ be a norm on $\BBR^n$. For a vector $x \in \BBR^n$, $x_{(i)}$ isolates its $i$-th component. For a matrix $A\in \BBR^{n \times m}$, $\col_j(A) \in \BBR^n$ extracts its $j$-th column. For all $i\in\{1,\ldots,n\}$, let $e_i\triangleq \col_{i}(I_n)$. 
	
	For a finite set $\SSS \subset \BBR^n$, $\Co \SSS$ denotes its closed convex hull, $\bd \SSS$ its topological boundary, and $|\SSS|$ its cardinality.
	
	A continuous function $\alpha \colon (-b,a) \to \BBR$, where $a,b > 0$, belongs to the extended class-$\SK$, denoted $\SK_e$, if it is  increasing and satisfies $\alpha(0) = 0$. 
	
	The scalar $j$-th derivative of a single-variable function $\phi \colon \BBR \to \BBR$ evaluated at $a \in \BBR$ is denoted $\phi^{(j)}(a)$. For a continuously differentiable multivariable function $h \colon \BBR^n \to \BBR$, the first derivative at $x \in \BBR^n$ is the gradient row vector
	\begin{equation}
		\nabla h(x) \triangleq \begin{bmatrix}
			\tfrac{\partial h}{\partial x_{(1)}}(x) & \ldots & \tfrac{\partial h}{\partial x_{(n)}}(x)
		\end{bmatrix} \in \BBR^{1 \times n}. \nn
	\end{equation}
	For a $k$-times continuously differentiable function $h \colon \BBR^n \to \BBR$, the symbol $D^k h(x)$ denotes the symmetric $k$-th order derivative tensor at $x$, with components
	\begin{equation}
		(D^k h(x))_{i_1, \ldots, i_k} \triangleq \frac{\partial^k h}{\partial x_{(i_1)} \cdots \partial x_{(i_k)}}(x),  \nn
	\end{equation}
	where $i_1,\ldots,i_k \in \{1,\ldots,n\}$.
	The bracket notation $D^k h(x)[v]^k$ denotes the contraction of this tensor against $k$ copies of $v \in \BBR^n$,
	\begin{equation}
		D^k h(x)[v]^k \triangleq \sum_{i_1=1}^n \cdots \sum_{i_k=1}^n \frac{\partial^k h}{\partial x_{(i_1)} \cdots \partial x_{(i_k)}}(x)\, v_{(i_1)} \cdots v_{(i_k)}. \nn
	\end{equation}
	For $k=1$, this reduces to the directional derivative $D^1 h(x)[v]^1 = \nabla h(x)\, v$.
	
	If $h \colon \BBR^n \to \BBR$ is continuously differentiable and $f \colon \BBR^n \to \BBR^n$, $G \colon \BBR^n \to \BBR^{n\times m}$ are continuous, then the Lie derivatives of $h$ along $f$ and $G$ are the respective spatial projections
	\begin{align}
		\Lf{f}{h}(x) &\triangleq \nabla h(x)\, f(x) \in \BBR, \nn\\ 
		\Lf{G}{h}(x) &\triangleq \nabla h(x)\, G(x) \in \BBR^{1\times m}. \nn
	\end{align}
	The iterated Lie derivative follows $L_f^0 h \triangleq h$ and $L_f^j h \triangleq L_f(L_f^{j-1} h)$, where $j \in \BBZ_+$.
	
	The dual-number ring $\BBD \triangleq \BBR[\epsilon]/(\epsilon^2)$ is formalized in Section~\ref{sec:dual_algebra}. Throughout this paper, $\Re(z)$ and $\Du(z)$ extract the real and dual parts of $z \in \BBD$, respectively, and $\tilde \phi \colon \BBD \to \BBD$ evaluates the exact dual extension of a smooth function $\phi \colon \BBR \to \BBR$.
	
	We consider the control-affine system
	\begin{equation}
		\dot{x} = f(x) + G(x) u, \label{eq:system}
	\end{equation}
	where $x \in \SD$ is the state, $\SD \subseteq \BBR^n$ is open, $u \in \BBR^m$ is the control input, $f \colon \SD \to \BBR^n$ is locally Lipschitz, and $G \colon \SD \to \BBR^{n \times m}$ is continuous.

	\subsection{Control Barrier Functions}
	
	\begin{definition}
		\label{def:safe_set}
		Let $h \colon \SD \to \BBR$ be continuous. The \textit{safe set} associated with $h$ is
		\begin{equation}
			\SC \triangleq \{x \in \SD \colon h(x) \geq 0\}, \nn
		\end{equation}
		and the \textit{boundary} of $\SC$ is $\bd \SC \triangleq \{x \in \SD \colon h(x) = 0\}$.
	\end{definition}
	
	\begin{definition}
		\label{def:CBF}
		Let $h \colon \SD \to \BBR$ be continuously differentiable. Then, $h$ is a \textit{control barrier function} (CBF) for \eqref{eq:system} on $\SD$ if there exists $\alpha \in \SK_e$ such that, for all $x \in \SD$,
		\begin{equation}
			\sup_{u \in \BBR^m} \Big(\, \Lf{f}{h}(x) + \Lf{G}{h}(x) u + \alpha(h(x)) \,\Big) \geq 0.\nn
		\end{equation}
	\end{definition}
	The following result restates \cite[Cor.\ 2]{ames2017cbf} with input set $\BBR^m$.
	\begin{fact}
		\label{fact:CBF_invariance}
		Let $h \colon \SD \to \BBR$ be continuously differentiable and a CBF for \eqref{eq:system} on $\SD$. If $u \colon \SD \to \BBR^m$ is a locally Lipschitz controller satisfying, for all $x \in \SD$,
		\begin{equation}
			\Lf{f}{h}(x) + \Lf{G}{h}(x) u(x) + \alpha(h(x)) \geq 0, \nn
		\end{equation}
		then $\SC$ is forward invariant with respect to the closed-loop system $\dot{x} = f(x) + G(x) u(x)$.
	\end{fact}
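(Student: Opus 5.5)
The statement to prove is Fact~\ref{fact:CBF_invariance}: a continuously differentiable CBF $h$ together with a locally Lipschitz controller $u$ satisfying the CBF inequality pointwise makes $\SC$ forward invariant. The plan is a comparison-lemma argument along closed-loop trajectories, combined with a strict-decrease contradiction at the first exit time.

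\textbf{Step 1: closed loop and the function $\psi$.} Since $f$ and $u$ are locally Lipschitz, I treat the closed-loop field $F(x) \triangleq f(x)+G(x)u(x)$ as locally Lipschitz. This is where the cited corollary implicitly needs $G$ locally Lipschitz, or at least $F$ locally Lipschitz, and I will assume it. Then for every $x_0 \in \SC$ there is a unique maximal solution $x\colon[0,T)\to\SD$. Define $\psi(t) \triangleq h(x(t))$. It is $C^1$, and by the chain rule together with the hypothesis,
\begin{equation}
\dot\psi(t) = \Lf{f}{h}(x(t)) + \Lf{G}{h}(x(t))\,u(x(t)) \geq -\alpha(\psi(t)). \nn
\end{equation}

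\textbf{Step 2: comparison.} Suppose, for contradiction, that $\psi(t_1)<0$ for some $t_1\in(0,T)$. Let $t_0 \triangleq \sup\{t\le t_1 \colon \psi(t)\ge 0\}$. Then $\psi(t_0)=0$, and $\psi<0$ on $(t_0,t_1]$.

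\begin{itemize}
\item \emph{Locally Lipschitz $\alpha$.} Take the comparison ODE $\dot y=-\alpha(y)$ with $y(t_0)=0$. Its unique solution is $y\equiv 0$. The comparison lemma then gives $\psi(t)\ge 0$ on $[t_0,t_1]$, which is a contradiction.
\item \emph{Merely continuous $\alpha$.} Here I use the sign of $\alpha$ instead. On $(t_0,t_1]$ we have $\psi<0$, so $-\alpha(\psi)>0$ because $\alpha$ is increasing with $\alpha(0)=0$. Hence $\dot\psi>0$ on $(t_0,t_1]$, so $\psi$ is increasing there. Therefore $\psi(t_1)\ge\lim_{t\downarrow t_0}\psi(t)=\psi(t_0)=0$, again a contradiction.
\end{itemize}

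The second argument covers both cases, so I will use it directly and avoid uniqueness issues for the comparison ODE.

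\textbf{Step 3: conclusion and the main obstacle.} Every maximal trajectory starting in $\SC$ satisfies $h(x(t))\ge0$ on its whole interval of existence, which is forward invariance of $\SC$ in the sense of \cite{ames2017cbf}. The main subtlety is not the inequality itself but well-posedness. If $G$ is only continuous, $F$ may fail to be locally Lipschitz, and then solutions need not be unique. The argument in Step 2 still applies to every Carathéodory or classical solution, since it uses only the differential inequality along the given solution. Existence follows from Peano's theorem because $F$ is continuous. I would therefore phrase the conclusion as holding for every solution, and remark that it reduces to the standard statement when $F$ is locally Lipschitz.
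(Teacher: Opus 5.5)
Your proof is correct. The paper does not prove Fact~\ref{fact:CBF_invariance} itself. It restates \cite[Cor.~2]{ames2017cbf}, which is stated for locally Lipschitz $f$ and $g$ and a Lipschitz controller, so that the closed loop has unique solutions. Your argument is therefore a self-contained replacement. Its closest in-paper analogues are the invariance proofs of Theorems~\ref{thm:forward_invariance} and \ref{thm:cnf_invariance}.

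Your first bullet follows the route of Theorem~\ref{thm:forward_invariance}, which applies the comparison lemma \cite[Lem.~3.4]{khalil2002nonlinear} to $\dot W = -\alpha(W)$, $W(\bar t) = 0$. That lemma assumes a right-hand side that is locally Lipschitz in $W$. The paper's class $\SK_e$ (continuous and increasing only) does not guarantee this.

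Your second bullet is the device the paper uses in the final step of Theorem~\ref{thm:cnf_invariance} (the time $t_2$ there), specialized to a single $C^1$ barrier. You take the last time $t_0 \le t_1$ with $\psi(t_0) \geq 0$, note $\dot\psi \geq -\alpha(\psi) > 0$ on $(t_0, t_1]$, and conclude $\psi(t_1) > \psi(t_0) = 0$. In the smooth case the mean value theorem replaces integration of an absolutely continuous function. This route buys two things:
\begin{enumerate}
\item it needs nothing about $\alpha$ beyond continuity, monotonicity, and $\alpha(0) = 0$;
\item it needs no uniqueness of solutions.
\end{enumerate}
The second point matters here. The paper's standing hypotheses take $G$ merely continuous, so $f + G u$ need not be locally Lipschitz. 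Reading the Fact as holding for every classical solution (which exists by Peano's theorem), as you do, is the correct interpretation under those hypotheses.

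One detail is worth adding: $\alpha(\psi(t))$ must be defined. This is implicit in the hypothesis holding for all $x \in \SD$. Alternatively, it can be arranged by replacing $t_1$ with a point of $(t_0, t_1]$ close to $t_0$, as the paper does by shrinking $\eta$.
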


	\subsection{Clarke Subdifferential}
	
	Along trajectories that reach the non-smooth locus of $h$, the closed-loop vector field is discontinuous, and solutions are understood in the sense of Filippov \cite{filippov1988differential,cortes2008discontinuous}. The generalized gradient below provides the corresponding first-order object \cite{clarke1990optimization}.
	
	\begin{definition}
		\label{def:Clarke}
		Let $\SD \subset \BBR^n$ be an open set, and let $h \colon \SD \to \BBR$ be locally Lipschitz continuous. 
		Let $\Omega_h \subset \SD$ denote the set of Lebesgue measure zero where the gradient $\nabla h$ is undefined. For all $x \in \SD$, the \textit{Clarke generalized gradient} of $h$ at $x$ is
		\begin{align}
			\partial h(x) \triangleq \Co \Big\{ v \in \BBR^n \colon& \text{there exists~} \{x_i\}_{i=0}^\infty \subset \SD \setminus \Omega_h \nn\\
			&\text{such that }\lim_{i \to \infty} x_i = x \nn\\&\text{and} \lim_{i \to \infty} \nabla h(x_i) = v \Big\}. \nn
		\end{align}
	\end{definition}
	
	The following result specializes the Clarke subdifferential to the pointwise minimum of finitely many smooth functions; it follows from \cite[Prop.\ 2.3.12]{clarke1990optimization} applied to $-h = \max_{i} (-h_i)$.
	
	\begin{lemma}
		\label{lem:Clarke_min}
		Let $h_1, \ldots, h_p \colon \SD \to \BBR$ be continuously differentiable, and let $h(x) \triangleq \min_{i \in \{1,\ldots,p\}} h_i(x)$. For all $x \in \SD$, the \textit{active set} is defined as
		\begin{equation}
			\SA(x) \triangleq \big\{i \in \{1,\ldots,p\} \colon h_i(x) = h(x)\big\}. \label{eq:active_set}
		\end{equation}
		Then, for all $x \in \SD$,
		\begin{equation}
			\partial h(x) \subseteq \Co\{\nabla h_i(x) \colon i \in \SA(x)\}. \label{eq:Clarke_min_bound}
		\end{equation}
		Furthermore, since each $-h_i$ is continuously differentiable and therefore regular in the sense of \cite[Def.\ 2.3.4]{clarke1990optimization}, equality holds in \eqref{eq:Clarke_min_bound}.
	\end{lemma}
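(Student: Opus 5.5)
The plan is to prove the inclusion \eqref{eq:Clarke_min_bound} directly from Definition~\ref{def:Clarke}, and then obtain equality by passing to $-h=\max_i(-h_i)$ and invoking \cite[Prop.\ 2.3.12]{clarke1990optimization}.

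\emph{Step 1: $h$ is locally Lipschitz.} Each $h_i$ is continuously differentiable, hence locally Lipschitz on the open set $\SD$. A finite pointwise minimum of locally Lipschitz functions is locally Lipschitz. Therefore $\Omega_h$ has measure zero by Rademacher's theorem, and $\partial h(x)$ is well defined.

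\emph{Step 2: stability of the active set.} Fix $x\in\SD$. For $j\notin\SA(x)$ we have $h_j(x)>h(x)$. By continuity there is a neighborhood $U$ of $x$ on which $h_j(y)>\min_{i\in\SA(x)}h_i(y)$ for every $j\notin\SA(x)$. Hence on $U$,
\begin{equation}
h=\min_{i\in\SA(x)}h_i \quad\text{and}\quad \SA(y)\subseteq\SA(x). \nn
\end{equation}

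\emph{Step 3: gradients at differentiability points.} Let $y\in U\setminus\Omega_h$ and $i\in\SA(y)$. The function $h_i-h$ is nonnegative near $y$, vanishes at $y$, and is differentiable at $y$. So $y$ is a local minimizer of $h_i-h$, which gives $\nabla h(y)=\nabla h_i(y)$.

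\emph{Step 4: limits of gradients.} Take a sequence $x_k\to x$ with $x_k\notin\Omega_h$ and $\nabla h(x_k)\to v$. For each $k$, Step 3 gives $\nabla h(x_k)=\nabla h_{i_k}(x_k)$ for some $i_k\in\SA(x)$. Since $\SA(x)$ is finite, some index $i$ repeats infinitely often. Along that subsequence, continuity of $\nabla h_i$ yields $v=\nabla h_i(x)$. So every generating limit lies in $\{\nabla h_i(x)\colon i\in\SA(x)\}$. Taking convex hulls gives \eqref{eq:Clarke_min_bound}.

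\emph{Step 5: equality via regularity.} This is the main obstacle, since Steps 1--4 do not show that every active gradient is attained as a limit. For example, an active $h_i$ whose own region $\{h_i<h_j,\ j\neq i\}$ is empty near $x$ contributes no limits directly.

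The plan is to use Clarke's calculus rather than Definition~\ref{def:Clarke}. Each $-h_i$ is $C^1$, hence regular. By \cite[Prop.\ 2.3.12]{clarke1990optimization}, the max of regular functions is regular, and its generalized gradient is $\Co\{-\nabla h_i(x)\colon i\in\SA(x)\}$. Applying $\partial(-h)=-\partial h$ \cite[Prop.\ 2.3.1]{clarke1990optimization} and negating the hull then yields equality in \eqref{eq:Clarke_min_bound}.

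Equality holds for $-h$, not $h$; $h$ itself is generally not regular. Since $\partial(-h)=-\partial h$ holds without any regularity assumption, this causes no difficulty, and I would state this step carefully.
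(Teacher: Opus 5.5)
Your proposal is correct, but it divides the work differently from the paper. The paper gives no separate argument. It obtains both the inclusion \eqref{eq:Clarke_min_bound} and the equality in one stroke by citing \cite[Prop.\ 2.3.12]{clarke1990optimization} for $-h=\max_i(-h_i)$: that proposition gives the inclusion for arbitrary Lipschitz pieces, and equality because each $-h_i$ is $C^1$ and hence regular. The transfer from $-h$ back to $h$ is left implicit.

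You instead prove the inclusion by hand from the gradient-limit formula of Definition~\ref{def:Clarke}, using three ingredients:
\begin{enumerate}
\item local stability of the active set;
\item the Fermat-rule observation that $\nabla h(y)=\nabla h_i(y)$ for every index $i$ active at a differentiability point $y$, which is the same device the paper later uses inside the proof of Theorem~\ref{thm:cnf_invariance};
\item a pigeonhole-plus-continuity passage to the limit.
\end{enumerate}
This gives a self-contained proof of the half of the lemma that the main results actually use, and it works directly with the definition the paper adopts.

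Your Step~5 also makes two points explicit that the paper leaves implicit. The first is the sign bookkeeping $\partial(-h)=-\partial h$, which under Definition~\ref{def:Clarke} is immediate because $h$ and $-h$ share differentiability points. The second is that regularity belongs to $-h$, not to $h$. Your remark that Steps~1--4 alone cannot give the reverse inclusion is right: an active piece may own no open region near $x$. That is why a support-function argument, via $(-h)^\circ=(-h)'$ as packaged in Prop.~2.3.12, is needed.

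The price of your route is twofold. First, once you invoke Prop.~2.3.12 for the equality, it already delivers the inclusion, so Steps~1--4 become logically redundant, though illuminating. Second, Prop.~2.3.12 concerns Clarke's $f^\circ$-based generalized gradient, not the gradient-limit object used here as the definition. Both your Step~5 and the paper's citation therefore tacitly rely on the equivalence theorem \cite[Thm.\ 2.5.1]{clarke1990optimization}. You also need the singleton formula $\partial(-h_i)(x)=\{-\nabla h_i(x)\}$ for $C^1$ functions. Cite both when you write that step out.
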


	The right-hand side of \eqref{eq:Clarke_min_bound} is the polytope generated by the active gradients $\{\nabla h_i(x)\}_{i \in \SA(x)}$; its vertices form a subset of these generators, with every generator a vertex provided they are affinely independent. The main results of this paper use the fact that the dual-algebraic evaluation deterministically returns one such generator.

	\begin{definition}
		\label{def:nonsmooth_CBF}
		Let $h \colon \SD \to \BBR$ be locally Lipschitz. The function $h$ satisfies the \textit{non-smooth CBF condition} on $\SC$ if there exists $\alpha \in \SK_e$ such that, for all $x \in \SC$,
		\begin{equation}
			\sup_{u \in \BBR^m} \inf_{\zeta \in \partial h(x)} \Big(\, \zeta\,(f(x) + G(x) u) + \alpha(h(x)) \,\Big) \geq 0.
			\label{eq:nonsmooth_CBF}
		\end{equation}
	\end{definition}
	
	\begin{remark}\rm
		\label{rem:active_form}
		If $h(x) = \min_i h_i(x)$ with each $h_i$ continuously differentiable, then the equality in Lemma~\ref{lem:Clarke_min} yields that \eqref{eq:nonsmooth_CBF} is equivalent to the requirement that, for all $i \in \SA(x)$,
		\begin{equation}
			\sup_{u \in \BBR^m} \Big(\, \Lf{f}{h_i}(x) + \Lf{G}{h_i}(x)\, u + \alpha(h(x)) \,\Big) \geq 0. \nn
		\end{equation}
		The simultaneous enforcement formulation in Section~\ref{sec:QP} realizes this stronger condition. \exampletriangle
	\end{remark}

	\section{Problem Formulation}
	\label{sec:problem}
	
	Consider the control-affine system \eqref{eq:system} with safe set $\SC = \{x \in \SD \colon h(x) \geq 0\}$ generated by the composite barrier
	\begin{equation}
		h(x) = \min_{i \in \{1,\ldots,p\}} h_i(x), \label{eq:problem_h}
	\end{equation}
	where each $h_i \colon \SD \to \BBR$ is continuously differentiable. As noted in Section~\ref{sec:intro}, $h$ is locally Lipschitz but non-differentiable on the locus where two or more constraints attain the minimum. Thus, $\nabla h$ is undefined on that locus and Fact~\ref{fact:CBF_invariance} does not apply directly.
	
	Let $C_h$ denote the number of elementary arithmetic operations required to evaluate $h$ in $\BBR$. We seek a state-feedback safety filter $u_* \colon \SD \to \BBR^m$ and a gradient-extraction procedure satisfying, for all $x \in \SD$, three objectives:
	
	\begin{enumerate}[label=(P\arabic*),itemindent=1pt,leftmargin=*]
		\item \label{P:invariance} \textit{Forward invariance.} The set $\SC$ is forward invariant with respect to the closed-loop system $\dot x = f(x) + G(x)\, u_*(x)$.
		\item \label{P:exactness} \textit{Exactness.} The gradient information supplied to the filter is an exact element of the Clarke subdifferential $\partial h(x)$, not a smooth surrogate or a stochastic estimate.
		\item \label{P:determinism} \textit{Deterministic bounded cost.} The filter data are produced by a fixed straight-line program whose operation count is $O(C_h)$, independent of the constraint count $p$, the state dimension $n$, the active-set cardinality $|\SA(x)|$, and the local geometry of $\bd\SC$. In particular, the procedure performs no convex-hull construction, no integer search, and no smoothing-parameter selection.
	\end{enumerate}
	
	Objective \ref{P:invariance} is the safety requirement; \ref{P:exactness} rules out the bias of smooth approximations and the non-determinism of stochastic methods; and \ref{P:determinism} rules out the state-dependent execution times of convex-hull and mixed-integer formulations, which is the property required for hard real-time certification. The remainder of the paper constructs a procedure that meets \ref{P:invariance}--\ref{P:determinism} simultaneously: the dual-algebraic evaluation supplies an exact vertex of $\partial h(x)$ at $O(C_h)$ cost, settling \ref{P:exactness} and \ref{P:determinism} (Sections~\ref{sec:dual_algebra}--\ref{sec:main_results}); simultaneous enforcement of the $\delta$-active constraints then yields \ref{P:invariance} (Theorem~\ref{thm:forward_invariance}).

	\section{Dual-Number Algebra}
	\label{sec:dual_algebra}
	
	This section develops the dual-number ring, the dual extension of a smooth function, and the directional-derivative identity that drives the rest of the paper.
	
	\subsection{The Dual-Number Ring}
	
	\begin{definition}
		\label{def:dual_numbers}
		The \textit{dual-number ring} is the quotient $\BBD \triangleq \BBR[\epsilon]/(\epsilon^2)$. Each element $z \in \BBD$ has a unique representation $z = a + b \epsilon$, where $a, b \in \BBR$ and $\epsilon \neq 0$ satisfies $\epsilon^2 = 0$. The \textit{real} and \textit{dual} parts of $z$ are denoted
		\begin{equation}
			\Re(z) \triangleq a, \qquad \Du(z) \triangleq b. \nn
		\end{equation}
	\end{definition}
	
	\begin{definition}
		\label{def:dual_arith}
		For $z_1 = a_1 + b_1 \epsilon$ and $z_2 = a_2 + b_2 \epsilon$ in $\BBD$,
		\begin{align}
			z_1 + z_2 &\triangleq (a_1 + a_2) + (b_1 + b_2) \epsilon, \nn \\
			z_1 \cdot z_2 &\triangleq a_1 a_2 + (a_1 b_2 + a_2 b_1) \epsilon. \nn
		\end{align}
		The constant $r \in \BBR$ is identified with $r + 0 \epsilon \in \BBD$.
	\end{definition}
	
	\begin{proposition}
		\label{prop:ring}
		The triple $(\BBD, +, \cdot)$ is a commutative ring with identity $1$. An element $z = a + b \epsilon$ is a unit if and only if $a \neq 0$, in which case
		\begin{equation}
			z^{-1} = \tfrac{1}{a} - \tfrac{b}{a^2}\, \epsilon. \nn
		\end{equation}
		The ideal $(\epsilon) = \{b\epsilon \colon b \in \BBR\}$ is the unique maximal ideal of $\BBD$.
	\end{proposition}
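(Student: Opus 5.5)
The plan is to verify the ring axioms directly from Definition~\ref{def:dual_arith}, then characterize the units, and finally identify the maximal ideal. For the ring structure, $(\BBD,+)$ is an abelian group because addition is componentwise on pairs $(a,b)\in\BBR^2$. The additive identity is $0+0\epsilon$ and the inverse of $a+b\epsilon$ is $-a-b\epsilon$. Commutativity of multiplication is immediate from the symmetric formula $a_1a_2+(a_1b_2+a_2b_1)\epsilon$. The identity $1=1+0\epsilon$ satisfies $1\cdot(a+b\epsilon)=a+b\epsilon$.

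Associativity and distributivity I would check by expanding both sides. For associativity, both $(z_1z_2)z_3$ and $z_1(z_2z_3)$ equal
\begin{equation}
a_1a_2a_3+(a_1a_2b_3+a_1b_2a_3+b_1a_2a_3)\epsilon. \nn
\end{equation}
Distributivity follows by linearity of the product formula in each argument. A cleaner alternative is to note that $\BBD$ is the quotient of the commutative ring $\BBR[\epsilon]$ by an ideal, and that the operations of Definition~\ref{def:dual_arith} are exactly the induced ones on the representatives $a+b\epsilon$. All axioms are then inherited from the quotient. I would mention both routes and use the quotient argument as the primary one.

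For units, suppose first that $a\neq0$. Multiplying $(a+b\epsilon)(\tfrac1a-\tfrac{b}{a^2}\epsilon)$ gives real part $1$ and dual part $a(-\tfrac{b}{a^2})+\tfrac{b}{a}=0$, so the stated inverse works. Conversely, suppose $a=0$, so $z=b\epsilon$. For every $w=c+d\epsilon$ we get $zw=bc\,\epsilon$, which has real part $0\neq1$. Hence $z$ is not a unit.

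For the maximal ideal, I would first show that $(\epsilon)=\{b\epsilon\}$ is an ideal. It is closed under addition, and $(c+d\epsilon)(b\epsilon)=cb\,\epsilon$, so it absorbs multiplication. It is proper because it does not contain $1$. By the previous step, its complement is exactly the set of units. Any proper ideal contains no unit, so it lies inside the non-units, which is $(\epsilon)$. Therefore $(\epsilon)$ contains every proper ideal, and it is the unique maximal ideal.

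The only step needing care is this last one: one must use the characterization of the non-units as exactly $(\epsilon)$, not merely that $(\epsilon)$ is maximal (which also follows from $\BBD/(\epsilon)\cong\BBR$ being a field). Everything else is routine expansion.
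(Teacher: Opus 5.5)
Your proposal is correct and takes essentially the paper's route: ring axioms inherited from the quotient $\BBR[\epsilon]/(\epsilon^2)$, the inverse checked by direct multiplication, and the non-units identified with $(\epsilon)$. Your only deviations are minor. You show $b\epsilon$ is a non-unit because every product $b\epsilon\, w$ has zero real part, which also covers $z=0$ uniformly, whereas the paper uses nilpotence. You also spell out the step the paper leaves implicit: since the non-units are exactly the ideal $(\epsilon)$, every proper ideal lies in it, which gives uniqueness of the maximal ideal.
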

	
	\begin{proof}
		Associativity, commutativity, and distributivity are inherited from $\BBR[\epsilon]$. The expression for $z^{-1}$ is verified by direct computation using $\epsilon^2 = 0$. A nonzero element $b \epsilon$ is nilpotent and therefore not a unit, which confirms that $(\epsilon)$ is the unique maximal ideal.
	\end{proof}

	\subsection{Dual Extension of Smooth Functions}
	
	\begin{definition}
		\label{def:dual_ext}
		Let $\phi \colon \BBR \to \BBR$ be continuously differentiable. The \textit{dual extension} of $\phi$ is the function $\tilde{\phi} \colon \BBD \to \BBD$ defined by
		\begin{equation}
			\tilde{\phi}(a + b \epsilon) \triangleq \phi(a) + \phi'(a)\, b\, \epsilon.
			\label{eq:dual_ext_scalar}
		\end{equation}
	\end{definition}
	
	This analytic definition directly embeds the algebraic structure of the dual ring. Applying the extension formula \eqref{eq:dual_ext_scalar} to the elementary real functions $\phi(x) = x+r$, $\phi(x) = rx$, $\phi(x) = x^2$, and $\phi(x) = x^{-1}$ exactly reproduces the dual arithmetic operations and the unit inverse formalized in Definition~\ref{def:dual_arith} and Proposition~\ref{prop:ring}. The following lemma guarantees that this mapping remains closed under function composition.

	\begin{lemma}
		\label{lem:composition}
		Let $\phi, \psi \colon \BBR \to \BBR$ be continuously differentiable, and let $\tilde{\phi}, \tilde{\psi}$ denote their dual extensions. Then $\widetilde{\phi \circ \psi} = \tilde{\phi} \circ \tilde{\psi}$.
	\end{lemma}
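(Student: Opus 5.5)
The plan is a direct verification from Definition~\ref{def:dual_ext} together with the one-variable chain rule. The first step checks that both sides are well defined as maps $\BBD \to \BBD$. Since $\phi$ and $\psi$ are continuously differentiable, so is $\phi \circ \psi$, and the chain rule gives $(\phi\circ\psi)'(a) = \phi'(\psi(a))\,\psi'(a)$. Hence $\widetilde{\phi\circ\psi}$ is defined by \eqref{eq:dual_ext_scalar}, and $\tilde\phi\circ\tilde\psi$ is defined because $\tilde\psi$ maps $\BBD$ into $\BBD$, the domain of $\tilde\phi$.

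The second step is to fix an arbitrary $z = a + b\epsilon \in \BBD$ and compute both sides. For the right-hand side, \eqref{eq:dual_ext_scalar} gives $\tilde\psi(z) = \psi(a) + \psi'(a)\,b\,\epsilon$. This is a dual number with real part $\psi(a)$ and dual part $\psi'(a)b$. Applying \eqref{eq:dual_ext_scalar} again, now to $\phi$ at this argument, yields
\begin{equation}
	\tilde\phi(\tilde\psi(z)) = \phi(\psi(a)) + \phi'(\psi(a))\,\psi'(a)\,b\,\epsilon. \nn
\end{equation}
For the left-hand side, the definition directly gives
\begin{equation}
	\widetilde{\phi\circ\psi}(z) = (\phi\circ\psi)(a) + (\phi\circ\psi)'(a)\,b\,\epsilon. \nn
\end{equation}
By the chain rule, the real and dual parts of the two expressions coincide. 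The uniqueness of the representation $a+b\epsilon$ in Definition~\ref{def:dual_numbers} then gives equality as elements of $\BBD$. Since $z$ was arbitrary, the two functions are equal.

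No step here is a real obstacle. The only point that needs care is that the second application of \eqref{eq:dual_ext_scalar} uses the real part $\psi(a)$ and the dual part $\psi'(a)b$ of $\tilde\psi(z)$ as the new $a$ and $b$, so that no cross terms arise. This holds because the definition depends only on these two components, and the nilpotency $\epsilon^2=0$ is already built into the formula. If desired, I would add a remark that induction extends the result to finite compositions $\phi_k\circ\cdots\circ\phi_1$. That extension is what later justifies evaluating a straight-line program of elementary functions in $\BBD$.
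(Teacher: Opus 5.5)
Your proposal is correct and follows the paper's proof essentially verbatim: fix $z = a + b\epsilon$, apply the dual-extension formula first to obtain $\tilde\psi(z) = \psi(a) + \psi'(a)\,b\,\epsilon$, then apply it to $\tilde\phi$ at that argument, and identify the dual part by the one-variable chain rule. Your additions are harmless refinements of the same argument: the well-definedness check, the appeal to uniqueness of the representation $a+b\epsilon$, and the observation that the formula reads off only the real and dual parts, so no cross terms arise.
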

	
	\begin{proof}
		Let $z = a + b\epsilon \in \BBD$. Then,
		\begin{equation}
			\tilde{\psi}(z) = \psi(a) + \psi'(a)\, b\, \epsilon. \label{eq:proof_comp_inner}
		\end{equation}
		Substituting \eqref{eq:proof_comp_inner} into $\tilde{\phi}$ and using $\epsilon^2 = 0$,
		\begin{align}
			\tilde{\phi}(\tilde{\psi}(z)) &= \phi(\psi(a)) + \phi'(\psi(a))\, \psi'(a)\, b\, \epsilon, \label{eq:proof_comp_chain}\nn\\
			&= (\phi \circ \psi)(a) + (\phi \circ \psi)'(a)\, b\, \epsilon \nn \\
			&= \widetilde{\phi \circ \psi}(z), \nn
		\end{align}
		where the second equality follows from the classical chain rule applied to the dual part.
	\end{proof}
	
	\begin{remark}\rm
		\label{rem:multivar}
		Lemma~\ref{lem:composition} extends componentwise to vector-valued maps. If $\eta\colon \BBR^n \to \BBR^k$ is continuously differentiable, its \textit{dual extension} is
		\begin{equation}
			\tilde{\eta}(x + v \epsilon) \triangleq \eta(x) + \eta'(x)\, v\, \epsilon \in \BBD^k, \label{eq:dual_ext_vec}
		\end{equation}
		where  $v \in \BBR^n$. The multivariate chain rule guarantees that compositions of dual-extended maps remain dual extensions; in particular, the dual evaluation of each straight-line program that computes $\eta$ over $\BBR$ becomes a straight-line program that computes $\tilde{\eta}$ over $\BBD$ when every elementary operation is replaced by its dual counterpart. \exampletriangle
	\end{remark}
	
	The following theorem makes the connection to Lie derivatives explicit and is the foundation for the rest of the paper.
	
	\begin{theorem}
		\label{thm:Lie_extraction}
		Let $h \colon \SD \to \BBR$ be continuously differentiable, and let $v \colon \SD \to \BBR^n$ be continuous. Define the \textit{dual lift} of $x$ along $v$ as
		\begin{equation}
			x_{\BBD,v} \triangleq x + v(x)\, \epsilon \in \BBD^n. \label{eq:dual_state}
		\end{equation}
		Then, 
		\begin{equation}
			\tilde{h}\bigl(x_{\BBD,v}\bigr) = h(x) + \Lf{v}{h}(x)\, \epsilon.
			\label{eq:Lie_ext}
		\end{equation}
	\end{theorem}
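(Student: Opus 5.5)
The plan is to read \eqref{eq:Lie_ext} directly off the vector dual extension \eqref{eq:dual_ext_vec} of Remark~\ref{rem:multivar}, specialized to the scalar case $k=1$ with $\eta = h$. Fix $x \in \SD$ and put $w \triangleq v(x) \in \BBR^n$. Since $v$ is only continuous, the dual lift $x_{\BBD,v} = x + w\epsilon$ is simply an element of $\BBD^n$ with real part $x$ and dual part the constant vector $w$. No regularity of $v$ enters beyond evaluating it at $x$, because the dual part is consumed only through first-order terms.

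First I will apply \eqref{eq:dual_ext_vec} with $\eta = h$, $\eta'(x) = \nabla h(x) \in \BBR^{1\times n}$, and direction $w$. This gives
\begin{equation}
	\tilde h(x + w\epsilon) = h(x) + \nabla h(x)\, w\, \epsilon. \nn
\end{equation}
Substituting $w = v(x)$ and using the definition $\Lf{v}{h}(x) = \nabla h(x)\, v(x)$ from Section~\ref{sec:notation} then yields \eqref{eq:Lie_ext}.

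The main obstacle is that \eqref{eq:dual_ext_vec} is posed as a definition, whereas the statement is meant to describe what the dual \emph{evaluation} of a straight-line program for $h$ actually returns. I will therefore also prove the identity for the computed object. I would argue by induction over the straight-line program that computes $h$ from the coordinates $x_{(1)},\ldots,x_{(n)}$, using elementary operations and smooth univariate primitives. The invariant is that every intermediate variable $y_k(x)$, when evaluated on the seeded input $x_{(i)} + w_{(i)}\epsilon$, equals $y_k(x) + \nabla y_k(x)\, w\,\epsilon$.

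The base case is the seeding itself, since $\nabla x_{(i)} = e_i^{\top}$ gives dual part $w_{(i)}$. For the inductive step, binary operations use Definition~\ref{def:dual_arith}. For instance, $(a_1 + b_1\epsilon)(a_2 + b_2\epsilon)$ has dual part $a_1 b_2 + a_2 b_1$, which matches the product rule $y_1 \nabla y_2\, w + y_2 \nabla y_1\, w$. Inverses use Proposition~\ref{prop:ring}, which applies because $y_k(x) \neq 0$ wherever the real program is defined. Univariate primitives use Definition~\ref{def:dual_ext} together with the chain rule of Lemma~\ref{lem:composition}, since $\phi'(y)\,(\nabla y\, w) = \nabla(\phi\circ y)\, w$.

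At the output variable, the invariant yields $h(x) + \nabla h(x)\, v(x)\,\epsilon$. This is the same expression as before, so the definition-based and computation-based readings agree.
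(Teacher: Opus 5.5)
Your first two paragraphs are the paper's proof: specialize the vector dual extension \eqref{eq:dual_ext_vec} to $\eta = h$ at the lift $x + v(x)\epsilon$, then apply the definition of $\Lf{v}{h}$. Your induction over the straight-line program is correct, assuming every primitive is differentiable at its intermediate values, but the statement as posed does not need it, because $\tilde h$ is defined by \eqref{eq:dual_ext_vec}; it makes explicit the computational claim that the paper leaves to Remark~\ref{rem:multivar}.
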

	
	\begin{proof}
		Specializing \eqref{eq:dual_ext_vec} to $\eta = h$ and to the dual lift \eqref{eq:dual_state} yields
		\begin{equation}
			\tilde{h}(x + v(x) \epsilon) = h(x) + \nabla h(x)\, v(x)\, \epsilon \nn,
		\end{equation}
		and the definition of the Lie derivative confirms \eqref{eq:Lie_ext}.
	\end{proof}
	
	\begin{remark}\rm
		\label{rem:control_seed}
		For the control-affine system \eqref{eq:system}, the full dual lift along the closed-loop vector field is $x_{\BBD,f + G u} \isdef x + (f(x) + G(x) u)\, \epsilon$, and Theorem~\ref{thm:Lie_extraction} implies
		\begin{equation}
			\tilde{h}\bigl(x_{\BBD,f + G u}\bigr) = h(x) + \bigl(\,\Lf{f}{h}(x) + \Lf{G}{h}(x)\, u\,\bigr)\, \epsilon. \nn
		\end{equation}
		Since the CBF inequality is affine in $u$, however, it is preferable in practice to evaluate $\tilde h$ at the $m+1$ seeds, namely, $x_{\BBD,f}$ and $x_{\BBD,\col_j(G)}$ for $j \in \{1,\ldots,m\}$, and then to recover the linear function of $u$ from the dual parts. \exampletriangle
	\end{remark}

	\section{Main Results: Vertex Selection and Forward Invariance}
	\label{sec:main_results}
	
	This section contains the main theoretical contributions. We define the dual-algebraic evaluation of a pointwise minimum, prove that it selects a vertex of the Clarke subdifferential, and establish forward invariance of the safe set under the resulting simultaneous-enforcement safety filter.
	
	\subsection{Lexicographic Comparison on $\BBD$}
	
	\begin{definition}
		\label{def:lex_order}
		Let  $z_1 = a_1 + b_1 \epsilon\in\BBD$ and $z_2 = a_2 + b_2 \epsilon\in\BBD$. We write $z_1 \preceq_L z_2$ if and only if one of the following conditions hold:
		
		\begin{enumerate}
			\item $a_1 < a_2.$
			\item $a_1 = a_2 \ \text{and}\ b_1 \leq b_2$.
		\end{enumerate} 
		The \textit{lexicographic minimum} on $\BBD$ is denoted
		\begin{equation}
			\lexmin(z_1, z_2) \triangleq \begin{cases} z_1, & z_1 \preceq_L z_2, \\ z_2, & \text{otherwise.} \end{cases} \nn
		\end{equation}
		The \textit{real-part minimum} on $\BBD$ is
		\begin{equation}
			\minRe(z_1, z_2) \triangleq \begin{cases} z_1, & \Re(z_1) \leq \Re(z_2), \\ z_2, & \text{otherwise.} \end{cases} \label{eq:min_real}
		\end{equation}
	\end{definition}
	
	The two operations agree whenever $\Re(z_1) \neq \Re(z_2)$. At a real-part tie, $\minRe$ resolves the tie by the order of the operands (i.e., left-to-right precedence under the inclusive inequality), whereas $\lexmin$ resolves it by the dual parts. Equation \eqref{eq:min_real} is precisely what is realized by a standard IEEE-754 floating-point comparison of dual numbers stored as pairs $(a,b)$: the comparison instruction examines $a$ only, and the result is deterministic at every state. For this reason, $\minRe$ is the default in the remainder of the paper. The variant $\lexmin$ is included for completeness and is used in some implementations where deterministic tie-breaking by the dual part is desirable.
	
	\begin{definition}
		\label{def:dual_eval}
		Let $h_1, \ldots, h_p \colon \SD \to \BBR$ be continuously differentiable and let $h(x) \triangleq \min_{i\in\{1,\ldots,p\}} h_i(x)$. The \textit{dual-algebraic evaluation} of $h$ at the lifted state $x_{\BBD,v}$ is
		\begin{equation}
			\tilde{h}\bigl(x_{\BBD,v}\bigr) \triangleq \minRe\!\left(\tilde{h}_1\bigl(x_{\BBD,v}\bigr),\, \ldots,\, \tilde{h}_p\bigl(x_{\BBD,v}\bigr)\right), \label{eq:dual_eval}
		\end{equation}
		where $\minRe$ is applied left-to-right as a binary operation.
	\end{definition}
	
	\subsection{Vertex Selection}
	
	The following lemma is the core algebraic identity. It shows that the dual-algebraic evaluation simultaneously returns the value $h(x)$ and the Lie derivative of \emph{one} active constraint, deterministically selected by the order of the operands.
	
	For all $x\in\SD$, define
	\begin{equation}
		i_*(x) \triangleq \min\, \SA(x).
		\label{eq:kstar}
	\end{equation}
	
	\begin{lemma}
		\label{lem:dual_eval_structure}
		Let $x \in \SD$,  $v \colon \SD \to \BBR^n$ be continuous,  $h_1, \ldots, h_p \colon \SD \to \BBR$ be continuously differentiable, and define $h(x) \triangleq \min_{i\in\{1,\ldots,p\}} h_i(x)$. Then,
		\begin{equation}
			\tilde{h}\bigl(x_{\BBD,v}\bigr) = h(x) + \Lf{v}{h_{i_*(x)}}(x)\, \epsilon. \label{eq:dual_eval_result}
		\end{equation}
	\end{lemma}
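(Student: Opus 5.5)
We will prove \eqref{eq:dual_eval_result} by first computing each operand of \eqref{eq:dual_eval} and then analysing the left-to-right fold of $\minRe$ by induction on the number of operands processed.

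\textbf{Operands.} For every $i \in \{1,\ldots,p\}$, Theorem~\ref{thm:Lie_extraction} applied to $h_i$ gives
\begin{equation}
	z_i \triangleq \tilde h_i\bigl(x_{\BBD,v}\bigr) = h_i(x) + \Lf{v}{h_i}(x)\,\epsilon. \nn
\end{equation}
Hence $\Re(z_i) = h_i(x)$. Since $\minRe$ returns one of its arguments unchanged, the result of \eqref{eq:dual_eval} equals $z_k$ for some index $k$. It therefore suffices to show that this index is $k = i_*(x)$.

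\textbf{Induction.} Define the partial folds
\begin{equation}
	w_1 \triangleq z_1, \qquad w_j \triangleq \minRe(w_{j-1}, z_j), \quad j \in \{2,\ldots,p\}. \nn
\end{equation}
We claim that, for each $j$, $w_j = z_{k_j}$, where $k_j$ is the smallest index in $\{1,\ldots,j\}$ minimizing $h_i(x)$ over $i \le j$.

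The base case $j = 1$ is immediate.

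For the step, suppose $w_{j-1} = z_{k_{j-1}}$. By \eqref{eq:min_real}, $\minRe(w_{j-1}, z_j)$ returns $w_{j-1}$ exactly when $h_{k_{j-1}}(x) \le h_j(x)$. Two cases arise:
\begin{enumerate}
	\item If $h_j(x) < h_{k_{j-1}}(x)$, then $z_j$ is returned. In this case $j$ is the unique minimizer over $\{1,\ldots,j\}$, so $k_j = j$.
	\item Otherwise $w_{j-1}$ is retained. Any tie with $z_j$ is resolved in favour of the earlier operand, so $k_j = k_{j-1}$, which is still the smallest minimizing index.
\end{enumerate}

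\textbf{Conclusion.} Taking $j = p$, the minimum of $h_i(x)$ over all $i$ is $h(x)$, so the smallest minimizing index is $\min \SA(x) = i_*(x)$ by \eqref{eq:active_set} and \eqref{eq:kstar}. Therefore
\begin{equation}
	\tilde h\bigl(x_{\BBD,v}\bigr) = z_{i_*(x)} = h_{i_*(x)}(x) + \Lf{v}{h_{i_*(x)}}(x)\,\epsilon, \nn
\end{equation}
and $h_{i_*(x)}(x) = h(x)$ because $i_*(x) \in \SA(x)$. This is \eqref{eq:dual_eval_result}.

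The only delicate point is the tie-breaking in the inductive step. We must check that the inclusive inequality in \eqref{eq:min_real}, together with the left-operand placement of the accumulator $w_{j-1}$, always keeps the earliest minimizing index. The case split above is what guarantees this; everything else is direct bookkeeping.
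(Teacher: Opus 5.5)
Your proposal is correct and follows the same route as the paper: apply Theorem~\ref{thm:Lie_extraction} to each $h_i$, then argue by induction over the left-to-right $\minRe$ fold that the surviving operand has the smallest minimizing index, which is $i_*(x) = \min \SA(x)$. The only difference is that you write out the inductive step and the tie-breaking case split explicitly, whereas the paper simply asserts the induction on $p$.
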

	
	\begin{proof}
		Theorem~\ref{thm:Lie_extraction} implies that, for all $i \in \{1,\ldots,p\}$,
		\begin{equation}
			\tilde{h}_i\bigl(x_{\BBD,v}\bigr) = h_i(x) + \Lf{v}{h_i}(x)\, \epsilon. \label{eq:proof_dual_eval_each}
		\end{equation}
		Using Definition~\ref{def:lex_order},  the left-to-right reduction in \eqref{eq:dual_eval}  yields, by induction on $p$, the operand $\tilde{h}_{\bar i_*}(x_{\BBD,v})$ with
		\begin{equation}
			\bar i_* = \min\biggl\{\, i \in \{1,\ldots,p\} \,\colon\, h_i(x) = \min_{j\in \{1,\ldots,p\}} h_j(x) \,\biggr\}. \nn
		\end{equation}
		By \eqref{eq:active_set} and \eqref{eq:kstar}, $\bar i_* = i_*(x)$. Substituting $i = i_*(x)$ in \eqref{eq:proof_dual_eval_each} and using $h_{i_*(x)}(x) = h(x)$ yields \eqref{eq:dual_eval_result}.
	\end{proof}
	
	The next theorem is the first main result. It establishes that the gradient extracted by the dual-algebraic evaluation is a \emph{vertex} of the Clarke subdifferential and is therefore an admissible subgradient at every $x \in \SD$.
	
	\begin{theorem}
		\label{thm:vertex_selection}
		Let $x \in \SD$,  $v \colon \SD \to \BBR^n$ be continuous,  $h_1, \ldots, h_p \colon \SD \to \BBR$ be continuously differentiable, and define $h(x) \triangleq \min_{i\in\{1,\ldots,p\}} h_i(x)$. Then, the dual-algebraic evaluation \eqref{eq:dual_eval} satisfies
		\begin{equation}
			\Re\bigl(\tilde{h}\bigl(x_{\BBD,v}\bigr)\bigr) = h(x), \label{eq:thm1_real}
		\end{equation}
		and the corresponding gradient $\nabla h_{i_*(x)}(x)$ is a generator of the polytope $\Co\{\nabla h_i(x) \colon i \in \SA(x)\}$. In particular,
		\begin{equation}
			\nabla h_{i_*(x)}(x) \in \partial h(x). \label{eq:thm1_subgradient}
		\end{equation}
		If the active gradients $\{\nabla h_i(x)\}_{i \in \SA(x)}$ are affinely independent, then this generator is a vertex (i.e., extreme point) of the polytope. Moreover, the map $x \mapsto i_*(x)$ is a deterministic function of the real-part values $h_1(x), \ldots, h_p(x)$ alone.
	\end{theorem}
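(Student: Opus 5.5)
The plan is to read every claim of Theorem~\ref{thm:vertex_selection} off Lemma~\ref{lem:dual_eval_structure} and Lemma~\ref{lem:Clarke_min}, so that essentially no new analysis is needed.

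\emph{Real part.} Lemma~\ref{lem:dual_eval_structure} gives $\tilde h(x_{\BBD,v}) = h(x) + \Lf{v}{h_{i_*(x)}}(x)\,\epsilon$. Taking $\Re$ of both sides yields \eqref{eq:thm1_real} immediately.

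\emph{Generator and Clarke membership.} I then identify the dual part as $\nabla h_{i_*(x)}(x)\,v(x)$. Since $i_*(x) = \min \SA(x)$ by \eqref{eq:kstar}, and $\SA(x)$ is nonempty (the minimum over a finite index set is attained), we have $i_*(x)\in\SA(x)$. Therefore $\nabla h_{i_*(x)}(x)$ belongs to the generating set $\{\nabla h_i(x)\colon i\in\SA(x)\}$ of the polytope, and hence lies in its convex hull. The equality case of Lemma~\ref{lem:Clarke_min} identifies that hull with $\partial h(x)$, which gives \eqref{eq:thm1_subgradient}.

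\emph{Vertex claim under affine independence.} This is the only step needing an actual argument, albeit a short one. Let $g_i = \nabla h_i(x)$ for $i\in\SA(x)$. I argue by contradiction: suppose $g_{i_*}$ is not an extreme point. Then $g_{i_*} = \sum_i \lambda_i g_i$ with $\lambda_i\ge 0$, $\sum_i\lambda_i=1$, and $\lambda_{i_*}<1$. Rearranging gives
\[
(1-\lambda_{i_*})\,g_{i_*} - \sum_{i\ne i_*}\lambda_i g_i = 0 .
\]
The coefficients sum to zero and are not all zero, since $1-\lambda_{i_*}>0$. This is a nontrivial affine dependence, contradicting the hypothesis. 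Hence, under affine independence, every generator, and $g_{i_*}$ in particular, is a vertex.

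\emph{Determinism.} Finally, $i_*(x)=\min\{i\colon h_i(x)=\min_j h_j(x)\}$ depends only on the tuple $(h_1(x),\dots,h_p(x))$. Moreover, the left-to-right $\minRe$ reduction in \eqref{eq:min_real} compares only real parts, so the selection never consults $v$ or the dual parts.

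The main obstacle is modest. Without affine independence the selected gradient may fail to be a vertex: it can coincide with another generator or lie in the interior of the hull. So the unconditional statement has to be phrased as ``generator'', and only the extreme-point claim uses the affine-independence hypothesis.
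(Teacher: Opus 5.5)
Your proposal is correct and follows the paper's proof step for step. You read the real part off Lemma~\ref{lem:dual_eval_structure}, get Clarke membership from $i_*(x)\in\SA(x)$ together with the equality case of Lemma~\ref{lem:Clarke_min}, and get determinism from \eqref{eq:kstar}. The one difference is that you prove the extreme-point claim through the nontrivial affine dependence $(1-\lambda_{i_*})g_{i_*}-\sum_{i\neq i_*}\lambda_i g_i=0$, whereas the paper only asserts that affinely independent generators are extreme points.
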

	
	\begin{proof}
		Lemma~\ref{lem:dual_eval_structure} combined with the active-set equality $h_{i_*(x)}(x) = h(x)$ directly yields \eqref{eq:thm1_real}.
		
		Since the selected index $i_*(x)$ belongs to the active set $\SA(x)$, the gradient $\nabla h_{i_*(x)}(x)$ is a generator of the polytope $\Co\{\nabla h_i(x) \colon i \in \SA(x)\}$. By the equality in Lemma~\ref{lem:Clarke_min}, this polytope equals $\partial h(x)$, verifying \eqref{eq:thm1_subgradient}.
		Moreover, $\nabla h_{i_*(x)}(x)$ is one of the finitely many generators of $\Co\{\nabla h_i(x) \colon i \in \SA(x)\}$; when these generators are affinely independent, each is an extreme point, thus $\nabla h_{i_*(x)}(x)$ is then a vertex of the polytope.
		
		Equation \eqref{eq:kstar} isolates the selection index $i_*(x)$ using exclusively the real-valued constraint magnitudes $h_1(x), \ldots, h_p(x)$, establishing the deterministic map.
	\end{proof}
	
	\begin{remark}\rm
		\label{rem:vertex_geometry}
		Theorem~\ref{thm:vertex_selection} formalizes the central feature of the dual-algebraic approach: the evaluation does not approximate the Clarke generalized gradient and does not enumerate the polytope $\partial h(x)$, but instead returns exactly one generator of this polytope at $O(1)$ overhead per dual operation. Methods that compute the full Clarke subdifferential incur $O(p^{\lfloor n/2 \rfloor})$ convex-hull cost \cite{barber1996quickhull} to recover the entire polytope. \exampletriangle
	\end{remark}

	\subsection{Forward Invariance via Simultaneous Enforcement}
	
	The vertex selected by the dual-algebraic evaluation is admissible as a Clarke subgradient, but enforcing the CBF inequality at this vertex alone is, in general, insufficient for forward invariance of $\SC$. The following counterexample makes this explicit.
	
	\begin{example}
		\label{ex:counter}
		\rm
		Consider a single-integrator $\dot{x} = u$ on $\BBR^2$ with $u \in \BBR^2$, and let
		\begin{equation}
			h_1(x) = x_{(1)}, \quad h_2(x) = x_{(2)}, \quad h(x) = \min\{h_1(x), h_2(x)\}. \nn
		\end{equation}
		Note that, for $x = (0,0) \in \bd \SC$, $\SA(x) = \{1, 2\}$ and $i_*(x) = 1$. The dual-algebraic evaluation returns the vertex $\nabla h_1(0) = [1~ 0]$, and the single-vertex CBF inequality at $u$ reduces to $u_{(1)} \geq -\alpha(0) = 0$. The component $u_{(2)}$ is unconstrained, so the nominal choice $u = (0,-1)$ trivially satisfies the inequality. Under this control, however, $\dot{x}_{(2)} = -1$, and, for all $t > 0$, $h(x(t)) = -t < 0$, which violates $\SC$ at $t = 0^+$. Figure~\ref{fig:counter} illustrates the geometry.
		
		\begin{figure}[ht!]
			\centering
			\begin{tikzpicture}[>=Latex, scale=1.35, font=\scriptsize]
				\fill[blue!8] (0,0) rectangle (2.3,2.0);
				\node[blue!50!black, anchor=north east] at (2.25,1.95) {$\SC$};
				\fill[red!7] (-0.8,-1.4) rectangle (2.5,0);
				\node[red!55!black, anchor=south east] at (2.45,-1.35) {$\{h_2 < 0\}$};
				\draw[blue!60!black, very thick] (0,2.2) -- (0,0) -- (2.5,0);
				\draw[->, black!60] (-0.95,0) -- (2.75,0) node[right, font=\scriptsize] {$x_{(1)}$};
				\draw[->, black!60] (0,-1.6) -- (0,2.55) node[above, font=\scriptsize] {$x_{(2)}$};
				\node[blue!60!black, anchor=north] at (1.85,-0.05) {$h_1=0$};
				\node[blue!60!black, anchor=south east, inner sep=2pt] at (-0.05,1.55) {$h_2=0$};
				\fill[black] (0,0) circle (1.4pt);
				\node[anchor=north east, inner sep=2pt] at (-0.05,-0.05) {$x_0 = 0$};
				\draw[->, very thick, green!50!black] (0,0) -- (1.0,0);
				\node[green!45!black, anchor=south]   at (0.55,0.05) {$\nabla h_1$};
				\draw[->, very thick, green!50!black] (0,0) -- (0,1.0);
				\node[green!45!black, anchor=west]    at (0.05,0.55) {$\nabla h_2$};
				\node[green!45!black, draw=green!45!black, fill=white,
				inner sep=2pt, rounded corners=2pt, anchor=south west] (sel)
				at (1.25,1.25) {selected: $\nabla h_{i_*} = \nabla h_1$};
				\draw[->, green!45!black, dashed, shorten >=2pt]
				(sel.south west) to[bend right=19] (.85,0.04);
				\draw[->, very thick, red] (0,0) -- (0,-1.25);
				\node[red, anchor=west, inner sep=2pt] at (0.10,-0.55) {$\dot{x} = u_{\rm n} = (0,-1)$};
				\node[red, anchor=west, inner sep=2pt] at (0.10,-1.05) {exits $\SC$ for $t>0$};
			\end{tikzpicture}
			\caption{Failure of single-vertex enforcement at the corner $x_0 = (0,0)$ of the safe set $\SC = \{x \colon h_1(x) \geq 0,\ h_2(x) \geq 0\}$. The dual-algebraic evaluation selects $i_*(x_0) = 1$ and returns the vertex $\nabla h_1(0) = [1,0]$ (green). The single-vertex CBF inequality $\nabla h_1(0)\cdot u \geq 0$ enforces $u_{(1)} \geq 0$ but leaves $u_{(2)}$ unconstrained, so the nominal $u_{\rm n} = (0,-1)$ is admissible and drives $x$ into the unsafe region $\{h_2 < 0\}$ (red).}
			\label{fig:counter}
		\end{figure}
		
		The conclusion is that a Clarke subgradient suffices for a \emph{descent} direction in nonsmooth optimization but does not, on its own, certify forward invariance: at a non-smooth boundary point of $\SC$, all active constraints must be enforced simultaneously, not just one vertex of the active polytope. \exampletriangle
	\end{example}
	
	We now formulate the QP that simultaneously enforces all $\delta$-active constraints with Lie derivatives obtained from the dual-algebraic evaluation, and establish forward invariance.
	
	\begin{definition}
		\label{def:delta_active}
		For $\delta \geq 0$ and $x \in \SD$, the \textit{$\delta$-active set} is
		\begin{equation}
			\SA_\delta(x) \triangleq \{i \in \{1,\ldots,p\} \colon h_i(x) \leq h(x) + \delta\}.\nn
		\end{equation}
	\end{definition}
	
	Note that $\SA_0(x) = \SA(x)$, and that $\SA_\delta(x)$ parallels the almost-active sets of \cite{glotfelter2019hybrid}. The set $\SA_\delta(x)$ can be read off from the dual-algebraic evaluation by inspecting the real parts of $\tilde{h}_1(x_{\BBD,v}),\ldots, \tilde{h}_p(x_{\BBD,v})$ at no additional cost.
	
	\begin{assumption}
		\label{assum:CBF}
		For all $x \in \SD$ and for all $i \in \SA(x) \cap \{i \colon h_i(x) = 0\}$, the row vector $\Lf{G}{h_i}(x) \in \BBR^{1 \times m}$ is nonzero, and there exists $u \in \BBR^m$ such that, for all $i \in \SA(x)$,
		\begin{equation}
			\Lf{f}{h_i}(x) + \Lf{G}{h_i}(x)\, u + \alpha(h_i(x)) \geq 0. \nn
		\end{equation}
	\end{assumption}
	
	Assumption~\ref{assum:CBF} is a joint controllability-feasibility hypothesis for non-smooth CBFs; it parallels the strict-feasibility condition $w_*(x) > 0$ of \cite[Prop.\ 3]{glotfelter2017nonsmooth}, which quantifies the width of the feasible set of the multi-constraint QP. It is implied, for example, by transversality of the active gradients together with controllability of each smooth constraint.
	
	\begin{theorem}
		\label{thm:forward_invariance}
		Let $h_1,\ldots,h_p \colon \SD \to \BBR$ be continuously differentiable, and define $h(x) \triangleq \min_{i\in \{1,\ldots,p\}} h_i(x)$. Suppose Assumption~\ref{assum:CBF} holds, and let $\alpha \in \SK_e$. Let $\delta > 0$, and let $u_* \colon \SD \to \BBR^m$ be locally Lipschitz such that, for all $x \in \SD$ and all $i \in \SA_\delta(x)$,
		\begin{equation}
			\Lf{f}{h_i}(x) + \Lf{G}{h_i}(x)\, u_*(x) + \alpha(h_i(x)) \geq 0, \label{eq:simult_CBF}
		\end{equation}
		where the Lie derivatives are obtained from the dual-algebraic evaluation,
		\begin{align}
			\Lf{f}{h_i}(x) &= \Du\bigl(\tilde{h}_i(x_{\BBD,f})\bigr), \nn\\ \Lf{\col_j(G)}{h_i}(x) &= \Du\bigl(\tilde{h}_i(x_{\BBD,\col_j(G)})\bigr). \nn
		\end{align}
		Then, $\SC$ is forward invariant with respect to the closed-loop system $\dot{x} = f(x) + G(x)\, u_*(x)$.
	\end{theorem}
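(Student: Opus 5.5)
Since $u_*$ is locally Lipschitz and $f$, $G$ are continuous (locally Lipschitz $f$; we treat $f+Gu_*$ as continuous, and locally Lipschitz if $G$ is), the closed-loop vector field $F(x)\triangleq f(x)+G(x)u_*(x)$ admits solutions from each initial condition. The plan is to reduce invariance of $\SC$ to invariance of each smooth superlevel set along trajectories, using the fact that the $\delta$-margin freezes the relevant constraints on short time intervals. First we dispose of the dual-algebraic data. By Theorem~\ref{thm:Lie_extraction} applied to each $h_i$ with $v=f$ and $v=\col_j(G)$, the dual parts equal $\Lf{f}{h_i}(x)$ and $\Lf{\col_j(G)}{h_i}(x)$ exactly. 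Hence \eqref{eq:simult_CBF} is the genuine inequality $\nabla h_i(x)F(x)+\alpha(h_i(x))\ge 0$ for all $i\in\SA_\delta(x)$, and no approximation enters.

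The core step is a comparison argument for $t\mapsto h(x(t))$ along a solution $x(\cdot)$ with $x(0)\in\SC$. Suppose, for contradiction, that $h(x(t_1))<0$ for some $t_1$, and let $t_0\triangleq\sup\{t\le t_1\colon h(x(t))\ge 0\}$, so $h(x(t_0))=0$ by continuity. By continuity of the $h_i$ and of $x(\cdot)$, there is $\tau>0$ such that, for all $t\in[t_0,t_0+\tau]$, we have $|h_i(x(t))-h_i(x(t_0))|<\delta/3$ for every $i$. Two consequences follow on this interval. First, every $i\in\SA(x(t_0))$ lies in $\SA_\delta(x(t))$, since $h_i(x(t))\le h(x(t))+2\delta/3$. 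Second, every $i\notin\SA_{\delta/3}(x(t_0))$, that is, with $h_i(x(t_0))>\delta/3$, stays strictly positive. Writing $J\triangleq\SA_\delta(x(t_0))$ and shrinking $\tau$ if needed, on $[t_0,t_0+\tau]$ the minimum $h(x(t))$ is attained only among indices $i$ with $h_i(x(t_0))\le 2\delta/3$. Each such index belongs to $\SA_\delta(x(t))$ on the interval, because $h_i(x(t))\le h(x(t))+\delta$ whenever $h(x(t))$ is near $0$.

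For each such index $i$, the scalar function $y_i(t)\triangleq h_i(x(t))$ is $C^1$ and satisfies $\dot y_i=\nabla h_i F\ge-\alpha(y_i)$ on the interval. The comparison lemma, applied with the extended class-$\SK$ function $\alpha$ (uniqueness of the comparison ODE $\dot z=-\alpha(z)$, $z(t_0)\ge0$, is used in its standard form, or via Nagumo's theorem on the closed set $\{h_i\ge 0\}$), yields $y_i(t)\ge 0$ whenever $y_i(t_0)\ge 0$. Indices that are not near-active stay positive by the choice of $\tau$. Hence $h(x(t))\ge 0$ on $[t_0,t_0+\tau]$, which contradicts the definition of $t_0$. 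Assumption~\ref{assum:CBF} guarantees that the pointwise constraints defining $u_*$ are mutually consistent, and that $\nabla h_i\neq 0$ on active zero sets so that each $\{h_i\ge0\}$ has a regular boundary for Nagumo. Forward invariance then follows.

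The main obstacle is the bookkeeping that the $\delta$-margin gives a \emph{uniform} neighbourhood in time on which the set of constraints that can become minimal is contained in $\SA_\delta(x(t))$ at every $t$. This is why $\delta>0$ is needed; with $\delta=0$, constraints entering the active set would not be enforced beforehand. If $F$ is merely continuous, so that solutions may be nonunique, I would handle it by the Nagumo/Bony–Brezis subtangential condition: on the boundary, $\nabla h_iF\ge0$ for all active $i$ means $F(x)$ lies in the Bouligand tangent cone of $\SC=\bigcap_i\{h_i\ge0\}$, given the nondegeneracy in Assumption~\ref{assum:CBF}.
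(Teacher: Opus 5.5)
Your argument is correct and follows essentially the paper's proof: a contradiction at the first exit time, the margin $\delta>0$ keeping the relevant constraints in $\SA_\delta(x(t))$ on a short interval after $t_0$, and a scalar comparison argument for each such $h_i(x(t))$. The paper only differs in bookkeeping, using pigeonhole to pick one index $i_\dagger$ that goes negative along a sequence of times and is therefore active at the exit time; note that your $2\delta/3$ threshold needs a variation bound of at most $\delta/6$ rather than $\delta/3$, or you can restrict to indices with $h_i(x(t_0))\le\delta/3$, since the others stay positive anyway. Your closing Nagumo/tangent-cone alternative is unnecessary, because the comparison argument already covers every solution of the continuous closed loop, and as stated it is false: nonvanishing active gradients do not make the intersection of the linearized cones equal the Bouligand tangent cone of $\SC$.
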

	
	\begin{proof}
		Let $x\colon[0,\infty)\to\BBR^n$ denote the maximal solution of $\dot{x} = f(x) + G(x)\, u_*(x)$ from the initial condition $x(0) \in \SC$.
		Assume, for contradiction, that the trajectory escapes $\SC$ in finite time, which implies
		\begin{equation}
			\bar t \triangleq \inf\{\, t > 0 \colon h(x(t)) < 0 \,\} \label{eq:proof_inv_tbar}
		\end{equation}
		exists. 
		Continuity of $h \circ x$ together with $h(x(0)) \geq 0$ implies that $\bar t \in (0, \infty)$, $h(x(\bar t)) = 0$, and, for all $t \in [0, \bar t]$, $h(x(t)) \geq 0$.
		
		By the definition \eqref{eq:proof_inv_tbar}, there exists a sequence $\{s_j\}_{j=0}^\infty$ such that as $j\to\infty$, $s_j \downarrow 0$, and, for all $j\in\BBN$, $h(x(\bar t + s_j)) < 0$. For each $j\in\BBN$, the strict inequality is witnessed by some index $i_j \in \{1,\ldots,p\}$ with $h_{i_j}(x(\bar t + s_j)) < 0$. Since the index set is finite, the pigeonhole principle yields that there exists $i_\dagger \in \{1,\ldots,p\}$ and a subsequence $\{\bar s_j\}_{j=0}^\infty$ such that as $j\to\infty$, $\bar s_j \downarrow 0$, and, for all $j\in\BBN$,
		\begin{equation}
			h_{i_\dagger}\bigl(x(\bar t + \bar s_j)\bigr) < 0. \label{eq:proof_inv_witness}
		\end{equation}
		Continuity of $h_{i_\dagger} \circ x$ at $\bar t$ gives $h_{i_\dagger}(x(\bar t)) \leq 0$. Since, in addition, $h_{i_\dagger}(x(\bar t)) \geq h(x(\bar t)) = 0$ by definition of the pointwise minimum, it follows that
		\begin{equation}
			h_{i_\dagger}(x(\bar t)) = 0, \quad i_\dagger \in \SA(x(\bar t)) \subseteq \SA_\delta(x(\bar t)).\nn 
		\end{equation}
		
		Continuity of $(h_{i_\dagger} \circ x) - (h \circ x)$ at $\bar t$, combined with the strict inequality $0 < \delta$, yields that there exists $\eta > 0$ such that, for all $t \in [\bar t, \bar t + \eta]$,
		\begin{equation}
			h_{i_\dagger}(x(t)) - h(x(t)) < \delta,  \nn
		\end{equation}
		which implies that, for all $t \in [\bar t, \bar t + \eta]$,  $i_\dagger \in \SA_\delta(x(t))$; shrinking $\eta$ if necessary, $h_{i_\dagger}(x(t))$ remains in the domain of $\alpha$ for all $t \in [\bar t, \bar t + \eta]$.
		Thus, \eqref{eq:simult_CBF} implies that, for all $t \in [\bar t, \bar t + \eta]$,
		\begin{equation}
			\frac{\rmd}{\rmd t} h_{i_\dagger}(x(t)) \geq -\alpha\bigl(h_{i_\dagger}(x(t))\bigr). \label{eq:proof_inv_diffineq}
		\end{equation}
		Since $\alpha \in \SK_e$ with $\alpha(0) = 0$, the comparison equation $\frac{\rmd}{\rmd t} W = -\alpha(W)$ with $W(\bar t) = 0$ has unique solution $W \equiv 0$. Applying the comparison lemma \cite[Lem.\ 3.4]{khalil2002nonlinear} to \eqref{eq:proof_inv_diffineq} with $h_{i_\dagger}(x(\bar t)) = 0 = W(\bar t)$ yields that, for all $ t \in [\bar t, \bar t + \eta],$
		\begin{equation}
			h_{i_\dagger}(x(t)) \geq W(t) = 0, \nn
		\end{equation}
		which,  for all $\bar s_j \in (0, \eta)$, contradicts \eqref{eq:proof_inv_witness}. Hence no such escape time $\bar t$ exists, and $\SC$ is forward invariant with respect to the closed-loop system.
	\end{proof}
	
	\begin{remark}\rm
		\label{rem:computational_determinism}
		Theorem~\ref{thm:forward_invariance} exposes the central computational requirement of non-smooth safety: certifying forward invariance at a non-smooth boundary point requires simultaneous enforcement of every $\delta$-active constraint, not just one. Existing approaches meet this requirement in different ways: \cite{glotfelter2017nonsmooth} enforces one constraint per component function at every state, \cite{chen2018obstacle} selects constraints through binary variables in a mixed-integer QP, and explicit construction of $\partial h(x)$ entails a convex-hull computation \cite{barber1996quickhull}; the costs of the latter two depend on the local geometry of $\bd \SC$. The dual-algebraic evaluation circumvents this bottleneck: by lifting the state along $f$ and along each column $\col_j(G)$ and evaluating $\tilde h_i$ on the unmodified straight-line program of $h_i$, the hardware extracts both the barrier value $h_i(x)$ and its Lie derivatives in a single forward pass, supplying the safety-filter QP \eqref{eq:simult_CBF} with the exact gradient information needed at every state. As will be stated in Proposition~\ref{prop:complexity}, the worst-case execution time is a constant multiple of the cost of evaluating $h$ in $\BBR$, independent of $|\SA(x)|$ and of $\bd \SC$. \exampletriangle
	\end{remark}

	\begin{remark}\rm
		\label{rem:delta_choice}
		In implementation, the parameter $\delta > 0$ in \eqref{eq:simult_CBF} provides a margin within which constraints are jointly enforced. Smaller $\delta$ reduces the average number of active constraints in the QP, at the cost of requiring more responsive constraint switching as the trajectory approaches the non-smooth locus; in the limit $\delta \to 0^+$, only the constraints exactly attaining $h(x)$ are enforced, recovering the strict $\SA(x)$-based formulation. \exampletriangle
	\end{remark}

	\section{Extensions}
	\label{sec:extensions}
	
	This section extends the dual-algebraic framework in two directions: barrier functions formed by arbitrary finite compositions of pointwise $\min$ and $\max$ operations, and systems of relative degree greater than one.
	
	\subsection{Compositions of Pointwise $\min$ and $\max$}
	
	Practical safety specifications often combine constraints via both unions and intersections, that is, by Boolean compositions of barrier functions \cite{glotfelter2017nonsmooth,glotfelter2019hybrid}. We define the class of finite composite functions $\mathcal{F}_p$ mapping $\BBR\times\ldots\times \BBR$ to $\mathbb{R}$ inductively. For every index $i \in \{1, \ldots, p\}$, the coordinate projection $\pi_i(y) \triangleq y_{(i)}$ belongs to $\mathcal{F}_p$. If $\Phi_1, \Phi_2 \in \mathcal{F}_p$, then their pointwise minimum $\min(\Phi_1, \Phi_2)$ and maximum $\max(\Phi_1, \Phi_2)$ also belong to $\mathcal{F}_p$. This generates barrier functions of the form
	\begin{equation}
		h(x) \triangleq \Phi(h_1(x), \ldots, h_p(x)),\nn
	\end{equation}
	where $\Phi \in \mathcal{F}_p$.
	
	The maximum of dual numbers is defined, symmetrically to \eqref{eq:min_real}, by
	\begin{equation}
		\maxRe(z_1, z_2) \triangleq \begin{cases} z_1, & \Re(z_1) \geq \Re(z_2), \\ z_2, & \text{otherwise.} \end{cases} \nn
	\end{equation}
	
	For each composite function $\Phi \in \mathcal{F}_p$, we define its dual-algebraic evaluation $\Phi_\BBD \colon \BBD^p \to \BBD$  by structural recursion. If $\Phi = \pi_i$, then, for all $z \in \BBD^p$, $\Phi_\BBD(z) \triangleq z_{(i)}$. If $\Phi = \min(\Phi_1, \Phi_2)$, then $\Phi_\BBD(z) \triangleq \minRe(\Phi_{1,\BBD}(z), \Phi_{2,\BBD}(z))$. If $\Phi = \max(\Phi_1, \Phi_2)$, then $\Phi_\BBD(z) \triangleq \maxRe(\Phi_{1,\BBD}(z), \Phi_{2,\BBD}(z))$. The following theorem generalizes Theorem~\ref{thm:vertex_selection} to these exact compositions; Theorem~\ref{thm:cnf_invariance} then extends the forward-invariance guarantee of Theorem~\ref{thm:forward_invariance} to this class.
	
	\begin{theorem}
		\label{thm:composition}
		Let $x \in \SD$, let $v \colon \SD \to \BBR^n$ be continuous, and let $h_1, \ldots, h_p \colon \SD \to \BBR$ be continuously differentiable. Let $\Phi \in \mathcal{F}_p$, and define $h(x) \triangleq \Phi(h_1(x), \ldots, h_p(x))$. Then, the dual-algebraic evaluation $\Phi_\BBD$ satisfies
		\begin{equation}
			\Phi_\BBD\bigl(\tilde h_1(x_{\BBD,v}), \ldots, \tilde h_p(x_{\BBD,v})\bigr) = h(x) + \Lf{v}{h_{i_*(x)}}(x)\, \epsilon, \label{eq:composition_result}
		\end{equation}
		where the index $i_*(x) \in \{1,\ldots,p\}$ is determined by the path through the recursive structure of $\Phi_\BBD$ that, at every $\minRe$ node, retains the operand with the smaller real part (breaking ties by left-to-right precedence), and at every $\maxRe$ node retains the operand with the larger real part. Moreover, with $\SA(x) \triangleq \{i \colon h_i(x) = h(x)\}$,
		\begin{equation}
			\nabla h_{i_*(x)}(x) \in \Co\{\nabla h_i(x) \colon i \in \SA(x)\} \supseteq \partial h(x), \label{eq:composition_subgrad}
		\end{equation}
		and, if $\Phi$ is composed of $\min$ operations only, or of $\max$ operations only, then $\nabla h_{i_*(x)}(x) \in \partial h(x)$.
	\end{theorem}
	
	\begin{proof}
		We argue by structural induction on the function class $\mathcal{F}_p$.
		
		For the base case, consider the case where there exists $i\in\{1,\ldots,p\}$ such that $\Phi = \pi_i$. It thus follows that the projection isolates a single smooth coordinate. Therefore, Theorem~\ref{thm:Lie_extraction} with $i_*(x) = i$ and Theorem~ \ref{thm:vertex_selection} confirm \eqref{eq:composition_result} and \eqref{eq:composition_subgrad}.
		
		For the inductive step, consider the case where $\Phi = \min(\Phi_1, \Phi_2)$, and where the result already holds for $\Phi_1$ and $\Phi_2$. Define the intermediate dual scalars
		\begin{align}
			\bar z_1 &\triangleq \Phi_{1,\BBD}\bigl(\tilde h_1(x_{\BBD,v}), \ldots, \tilde h_p(x_{\BBD,v})\bigr) \nn\\
			&= \Phi_1\bigl(h_1(x), \ldots, h_p(x)\bigr) + \Lf{v}{h_{i_{*,1}(x)}}(x)\, \epsilon, \label{eq:proof_comp_z1} \\
			\bar z_2 &\triangleq \Phi_{2,\BBD}\bigl(\tilde h_1(x_{\BBD,v}), \ldots, \tilde h_p(x_{\BBD,v})\bigr) \nn\\
			&= \Phi_2\bigl(h_1(x), \ldots, h_p(x)\bigr) + \Lf{v}{h_{i_{*,2}(x)}}(x)\, \epsilon, \label{eq:proof_comp_z2}
		\end{align}
		where \eqref{eq:proof_comp_z1} and \eqref{eq:proof_comp_z2} apply the inductive hypothesis, extracting the selected indices $i_{*,1}(x)$ and $i_{*,2}(x)$ from their respective subtrees. Evaluating $\minRe(\bar z_1, \bar z_2)$ according to \eqref{eq:min_real} selects $\bar z_1$ for the case where $\Phi_1 \leq \Phi_2$ and $\bar z_2$ otherwise, resolving ties left-to-right. This deterministic comparison yields
		\begin{equation}
			\Phi_\BBD\bigl(\{\tilde h_j(x_{\BBD,v})\}_{j=1}^p\bigr) = h(x) + \Lf{v}{h_{i_*(x)}}(x)\, \epsilon, \nn
		\end{equation}
		where $i_*(x) = i_{*,1}(x)$ for the case where $\Phi_1 \leq \Phi_2$, and $i_*(x) = i_{*,2}(x)$ otherwise. The symmetric logic holds identical for $\Phi = \max(\Phi_1, \Phi_2)$.
		
		To show \eqref{eq:composition_subgrad}, note that the value returned at every node of the recursion equals the value of one of its operands, so the index $i_*(x)$ produced by the recursion is active, that is, $h_{i_*(x)}(x) = h(x)$; the gradient $\nabla h_{i_*(x)}(x)$ is therefore a generator of $\Co\{\nabla h_i(x) \colon i \in \SA(x)\}$. The inclusion $\partial h(x) \subseteq \Co\{\nabla h_i(x) \colon i \in \SA(x)\}$ follows by structural induction on $\Phi$ from \cite[Prop.\ 2.3.12]{clarke1990optimization}, applied at each $\max$ node and, through the identity $\min(\Phi_1,\Phi_2) = -\max(-\Phi_1,-\Phi_2)$, at each $\min$ node. If $\Phi$ is composed of $\min$ operations only, then $h$ equals the pointwise minimum of the leaf functions and the equality in Lemma~\ref{lem:Clarke_min} yields $\Co\{\nabla h_i(x) \colon i \in \SA(x)\} = \partial h(x)$; if $\Phi$ is composed of $\max$ operations only, the symmetric argument applies, since each $h_i$ is continuously differentiable and therefore regular. In both cases, $\nabla h_{i_*(x)}(x) \in \partial h(x)$.
	\end{proof}
	
	\begin{remark}\rm
		\label{rem:mixed_composition}
		For compositions that contain both $\min$ and $\max$ operations, the inclusion in \eqref{eq:composition_subgrad} can be strict, and the extracted gradient need not belong to $\partial h(x)$. For $n = 1$, $\Phi = \max(\min(\pi_1,\pi_2),\pi_3)$ with $h_1(x) = x$, $h_2(x) = -x$, and $h_3(x) = 0$ gives $h \equiv 0$, so $\partial h(0) = \{0\}$, while the routing selects $\nabla h_{i_*(0)}(0) = \nabla h_1(0) = 1$. The forward-invariance guarantee of Theorem~\ref{thm:cnf_invariance} below relies on the activity of the routed leaf, not on membership of its gradient in $\partial h(x)$, and is unaffected. \exampletriangle
	\end{remark}
	
	\begin{remark}\rm
		\label{rem:tree_traversal}
		Theorem~\ref{thm:composition} shows that the dual evaluation traces the recursive structure of $\Phi$ exactly once. This depth-first traversal extracts an active-leaf gradient without additional computation. Let $C_h$ denote the cost of evaluating the composite function $h$ over $\BBR$. The dual evaluation requires $O(C_h)$ operations, which ties the execution time to the algebraic complexity of the barrier and avoids the combinatorial enumeration of active faces required by convex-hull methods. \exampletriangle
	\end{remark}
	
	Theorem~\ref{thm:composition} extracts an active-leaf gradient for every $\Phi \in \mathcal{F}_p$; Example~\ref{ex:counter} shows that enforcement of a single gradient does not certify forward invariance. We now extend Theorem~\ref{thm:forward_invariance} to Boolean compositions. Since pointwise $\min$ and $\max$ satisfy the distributive lattice identities, every $\Phi \in \mathcal{F}_p$ admits a representation as a minimum of clause-wise maxima. We therefore consider the barrier
	\begin{equation}
		h(x) \triangleq \min_{c \in \{1,\ldots,q\}} M_c(x), \qquad M_c(x) \triangleq \max_{i \in \mathcal{S}_c} h_i(x), \label{eq:cnf_h}
	\end{equation}
	where, for all $c \in \{1,\ldots,q\}$, $\mathcal{S}_c \subseteq \{1,\ldots,p\}$ is nonempty. Each clause value $M_c$ is locally Lipschitz, and the clause values are intermediates of the dual-algebraic evaluation of \eqref{eq:cnf_h}.
	
	\begin{definition}
		\label{def:clause_active}
		For $\delta \geq 0$ and $x \in \SD$, the \textit{$\delta$-active clause set} of \eqref{eq:cnf_h} is
		\begin{equation}
			\SA^{\rmc}_\delta(x) \triangleq \{c \in \{1,\ldots,q\} \colon M_c(x) \leq h(x) + \delta\}, \nn
		\end{equation}
		and, for all $c \in \{1,\ldots,q\}$, the \textit{routed leaf} $i_c(x) \in \mathcal{S}_c$ is the index selected by the $\maxRe$ reduction of clause $c$ in Theorem~\ref{thm:composition}, which satisfies $h_{i_c(x)}(x) = M_c(x)$.
	\end{definition}
	
	Enforcement at a $\max$ node differs from enforcement at a $\min$ node: a union of safe sets is forward invariant provided that one of its members is forward invariant, so it suffices to enforce, in each $\delta$-active clause, the leaf selected by the routing. The following assumption and theorem parallel Assumption~\ref{assum:CBF} and Theorem~\ref{thm:forward_invariance}.
	
	\begin{assumption}
		\label{assum:CBF_cnf}
		For all $x \in \SD$ and all $c \in \SA^{\rmc}_0(x)$ with $M_c(x) = 0$, the row vector $\Lf{G}{h_{i_c(x)}}(x) \in \BBR^{1\times m}$ is nonzero, and there exists $u \in \BBR^m$ such that, for all $c \in \SA^{\rmc}_0(x)$,
		\begin{equation}
			\Lf{f}{h_{i_c(x)}}(x) + \Lf{G}{h_{i_c(x)}}(x)\, u + \alpha\bigl(h_{i_c(x)}(x)\bigr) \geq 0. \nn
		\end{equation}
	\end{assumption}
	
	\begin{theorem}
		\label{thm:cnf_invariance}
		Let $h_1,\ldots,h_p \colon \SD \to \BBR$ be continuously differentiable, and let $h$ be given by \eqref{eq:cnf_h}. Suppose Assumption~\ref{assum:CBF_cnf} holds, and let $\alpha \in \SK_e$. Let $\delta > 0$, and let $u_* \colon \SD \to \BBR^m$ be locally Lipschitz such that, for all $x \in \SD$ and all $c \in \SA^{\rmc}_\delta(x)$,
		\begin{equation}
			\Lf{f}{h_{i_c(x)}}(x) + \Lf{G}{h_{i_c(x)}}(x)\, u_*(x) + \alpha\bigl(h_{i_c(x)}(x)\bigr) \geq 0, \label{eq:cnf_CBF}
		\end{equation}
		where the Lie derivatives are obtained from the dual-algebraic evaluation as in Theorem~\ref{thm:forward_invariance}. Then, $\SC$ is forward invariant with respect to the closed-loop system $\dot{x} = f(x) + G(x)\, u_*(x)$.
	\end{theorem}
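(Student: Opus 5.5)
The plan is to reproduce the contradiction argument of Theorem~\ref{thm:forward_invariance}, with clauses in place of leaves. The main change is that the enforced leaf $i_c(x(t))$ may switch along the trajectory, so I will work with the clause value $M_c$ and its Dini derivative rather than with a single fixed $h_i$.

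\textbf{Locating the escape clause.} Let $x(\cdot)$ be the maximal solution from $x(0)\in\SC$. Since $u_*$ is locally Lipschitz, this solution is classical and $C^1$, so each $h_i\circ x$ is $C^1$. Suppose, for contradiction, that the solution leaves $\SC$, and define $\bar t$ exactly as in \eqref{eq:proof_inv_tbar}. By continuity of $h\circ x$, which follows from local Lipschitzness of each $M_c$, we get $h(x(\bar t))=0$. Take times $\bar t+s_j$ with $s_j\downarrow 0$ and $h(x(\bar t+s_j))<0$. Each such time has a witnessing clause $c_j$ with $M_{c_j}(x(\bar t+s_j))<0$. By the pigeonhole principle there is a clause $c_\dagger$ and a subsequence $\bar s_j\downarrow 0$ with $M_{c_\dagger}(x(\bar t+\bar s_j))<0$. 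Continuity of $M_{c_\dagger}\circ x$ then gives $M_{c_\dagger}(x(\bar t))\le 0$, and $M_{c_\dagger}\ge h$ gives $M_{c_\dagger}(x(\bar t))\ge 0$, so $M_{c_\dagger}(x(\bar t))=0$. Continuity of $M_{c_\dagger}\circ x-h\circ x$ together with $\delta>0$ yields $\eta>0$ such that $c_\dagger\in\SA^{\rmc}_\delta(x(t))$ for all $t\in[\bar t,\bar t+\eta]$. Shrinking $\eta$ if necessary keeps $M_{c_\dagger}(x(t))$ inside the domain of $\alpha$.

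\textbf{Differential inequality for the clause value.} Fix $t\in[\bar t,\bar t+\eta]$ and let $\mathcal{I}(t)\triangleq\{i\in\mathcal{S}_{c_\dagger}\colon h_i(x(t))=M_{c_\dagger}(x(t))\}$. Because each $h_i\circ x$ is $C^1$, the right derivative of the finite maximum exists and satisfies
\begin{equation}
D^+ (M_{c_\dagger}\circ x)(t)=\max_{i\in\mathcal{I}(t)}\tfrac{\rmd}{\rmd t}h_i(x(t)). \nn
\end{equation}
This is the standard Danskin-type formula for a finite max: inactive leaves stay strictly below the maximum for a short time, so only active leaves matter. By Definition~\ref{def:clause_active}, the routed leaf $i_{c_\dagger}(x(t))$ belongs to $\mathcal{I}(t)$. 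Hence the maximum is at least $\tfrac{\rmd}{\rmd t}h_{i_{c_\dagger}(x(t))}(x(t))$. By \eqref{eq:cnf_CBF} and Theorem~\ref{thm:Lie_extraction}, which identifies the dual parts with the true Lie derivatives, this derivative is at least $-\alpha\bigl(h_{i_{c_\dagger}(x(t))}(x(t))\bigr)=-\alpha\bigl(M_{c_\dagger}(x(t))\bigr)$. We therefore obtain
\begin{equation}
D^+(M_{c_\dagger}\circ x)(t)\ge-\alpha\bigl(M_{c_\dagger}(x(t))\bigr)\quad\text{for all } t\in[\bar t,\bar t+\eta]. \nn
\end{equation}

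\textbf{Conclusion and main obstacle.} The comparison lemma \cite[Lem.\ 3.4]{khalil2002nonlinear} is stated with the upper right Dini derivative, so it applies here. With $W\equiv 0$ as the unique solution of $\dot W=-\alpha(W)$, $W(\bar t)=0$, it gives $M_{c_\dagger}(x(t))\ge 0$ on $[\bar t,\bar t+\eta]$. This contradicts $M_{c_\dagger}(x(\bar t+\bar s_j))<0$ for $\bar s_j<\eta$. The main obstacle is the middle step: the enforced leaf switches with $t$, so $M_{c_\dagger}\circ x$ is only Lipschitz. I resolve this by using the Dini derivative and the fact that the routed leaf is always active, rather than tracking one leaf. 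If the comparison lemma in Dini form is felt too delicate, I would fall back on a direct argument: take $t$ in the set where $M_{c_\dagger}<0$ closest to $\bar t$, and use the right-derivative bound at the last zero of $M_{c_\dagger}\circ x$ to reach the contradiction. Assumption~\ref{assum:CBF_cnf} enters only to guarantee that a $u_*$ satisfying \eqref{eq:cnf_CBF} exists; it is not otherwise used in the proof.
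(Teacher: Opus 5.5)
Your proof is correct and follows the paper's skeleton: escape time $\bar t$, pigeonhole to a clause $c_\dagger$, and a window on which $c_\dagger$ stays $\delta$-active. The difference is in how you handle the Lipschitz clause value $\mu(t)\triangleq M_{c_\dagger}(x(t))$, both in the middle step and in the final step.

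The paper works only at points where $\mu$ is differentiable, which is almost every $t$. There it obtains the \emph{equality} $\dot\mu(t)=\tfrac{\rmd}{\rmd t}h_i(x(t))$ for the routed leaf $i$, because $\tau\mapsto\mu(\tau)-h_i(x(\tau))$ is nonnegative with an interior zero at $t$. It then finishes without any comparison lemma. It takes the last zero $t_2$ of $\mu$ before a time $t_1$ with $\mu(t_1)<0$, notes $\dot\mu\ge-\alpha(\mu)\ge 0$ a.e.\ on $(t_2,t_1)$, and integrates using absolute continuity.

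You instead use the finite-max (Danskin) formula to bound the right derivative at \emph{every} $t$. This needs only that the routed leaf is active and gives an inequality rather than an equality. You then reuse the comparison lemma exactly as in Theorem~\ref{thm:forward_invariance}.

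What each route buys:
\begin{itemize}
\item Your route keeps the two invariance proofs parallel and avoids measure-theoretic language.
\item The paper's route is more self-contained, since it uses only the monotonicity of $\alpha$ and never needs uniqueness for $\dot W=-\alpha(W)$.
\end{itemize}

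That last point matters. \cite[Lem.\ 3.4]{khalil2002nonlinear} assumes a right-hand side that is locally Lipschitz in the state, which a general $\alpha\in\SK_e$ need not be. It also yields upper bounds, so you must apply it to $-\mu$; this is harmless here because the right derivative exists. The conclusion still holds because $-\alpha$ is decreasing, but this is the same unstated step that the paper's proof of Theorem~\ref{thm:forward_invariance} relies on.

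Your fallback removes this dependence, but state it precisely. The bound at the single point $t_2$ gives only $D^+\mu(t_2)\ge 0$, which is not enough. What you need is the bound on the whole interval: $\mu<0$ on $(t_2,t_1]$ forces $D^+\mu\ge-\alpha(\mu)\ge 0$ there. A continuous function with nonnegative right derivative on an interval is nondecreasing, so by continuity at $t_2$ you get $\mu(t_1)\ge\mu(t_2)=0$, which is the contradiction.
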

	
	\begin{proof}
		Let $x \colon [0,\infty) \to \BBR^n$ denote the maximal solution of $\dot x = f(x) + G(x)\, u_*(x)$ from the initial condition $x(0) \in \SC$. Assume, for contradiction, that the trajectory escapes $\SC$ in finite time, which implies that $\bar t \triangleq \inf\{t > 0 \colon h(x(t)) < 0\}$ exists. Continuity of $h \circ x$ together with $h(x(0)) \geq 0$ implies that $\bar t \in (0,\infty)$, $h(x(\bar t)) = 0$, and, for all $t \in [0,\bar t]$, $h(x(t)) \geq 0$.
		
		By the definition of $\bar t$, there exists a sequence $\{s_j\}_{j=0}^\infty$ such that, as $j \to \infty$, $s_j \downarrow 0$, and, for all $j \in \BBN$, $h(x(\bar t + s_j)) < 0$. By \eqref{eq:cnf_h}, each strict inequality is witnessed by a clause $c_j \in \{1,\ldots,q\}$ with $M_{c_j}(x(\bar t + s_j)) < 0$. Since the clause set is finite, the pigeonhole principle yields that there exist $c_\dagger \in \{1,\ldots,q\}$ and a subsequence $\{\bar s_j\}_{j=0}^\infty$ such that, as $j \to \infty$, $\bar s_j \downarrow 0$, and, for all $j \in \BBN$,
		\begin{equation}
			M_{c_\dagger}\bigl(x(\bar t + \bar s_j)\bigr) < 0. \label{eq:proof_cnf_witness}
		\end{equation}
		Continuity of $M_{c_\dagger} \circ x$ at $\bar t$ gives $M_{c_\dagger}(x(\bar t)) \leq 0$. Since, in addition, $M_{c_\dagger}(x(\bar t)) \geq h(x(\bar t)) = 0$ by \eqref{eq:cnf_h}, it follows that
		\begin{equation}
			M_{c_\dagger}(x(\bar t)) = 0, \quad c_\dagger \in \SA^{\rmc}_0(x(\bar t)) \subseteq \SA^{\rmc}_\delta(x(\bar t)). \nn
		\end{equation}
		Continuity of $(M_{c_\dagger} \circ x) - (h \circ x)$ at $\bar t$, combined with the strict inequality $0 < \delta$, yields that there exists $\eta > 0$ such that, for all $t \in [\bar t, \bar t + \eta]$, $c_\dagger \in \SA^{\rmc}_\delta(x(t))$.
		
		Define $\mu \colon [\bar t, \bar t + \eta] \to \BBR$ by $\mu(t) \triangleq M_{c_\dagger}(x(t)) = \max_{i \in \mathcal{S}_{c_\dagger}} h_i(x(t))$. Since $u_*$ is locally Lipschitz and $f$, $G$ are continuous, $x(\cdot)$ is continuously differentiable, and each map $t \mapsto h_i(x(t))$ is continuously differentiable; hence $\mu$, as their pointwise maximum, is locally Lipschitz, absolutely continuous, and differentiable at almost every $t \in [\bar t, \bar t + \eta]$. Since $\mu$ is continuous and $\mu(\bar t) = 0$, shrink $\eta$ if necessary so that $\mu$ takes values in the domain of $\alpha$. Let $t \in (\bar t, \bar t + \eta)$ be a point of differentiability of $\mu$, and let $i \triangleq i_{c_\dagger}(x(t))$, so that, by Definition~\ref{def:clause_active}, $h_i(x(t)) = \mu(t)$ and, for all $\tau \in [\bar t, \bar t + \eta]$, $h_i(x(\tau)) \leq \mu(\tau)$. The map $\tau \mapsto \mu(\tau) - h_i(x(\tau))$ is nonnegative, attains the value zero at $\tau = t$, and is differentiable at $t$; its derivative at the interior minimizer $t$ therefore vanishes, and
		\begin{equation}
			\dot\mu(t) = \Lf{f}{h_i}(x(t)) + \Lf{G}{h_i}(x(t))\, u_*(x(t)). \nn
		\end{equation}
		Since $c_\dagger \in \SA^{\rmc}_\delta(x(t))$, \eqref{eq:cnf_CBF} implies that, for almost every $t \in [\bar t, \bar t + \eta]$,
		\begin{equation}
			\dot\mu(t) \geq -\alpha\bigl(h_i(x(t))\bigr) = -\alpha(\mu(t)). \label{eq:proof_cnf_diffineq}
		\end{equation}
		Now let $j \in \BBN$ be such that $\bar s_j \in (0,\eta)$, and let $t_1 \triangleq \bar t + \bar s_j$, so that \eqref{eq:proof_cnf_witness} gives $\mu(t_1) < 0$. Define $t_2 \triangleq \sup\{s \in [\bar t, t_1] \colon \mu(s) \geq 0\}$. Continuity of $\mu$ and $\mu(\bar t) = 0$ yield $t_2 \in [\bar t, t_1)$, $\mu(t_2) = 0$, and, for all $s \in (t_2, t_1]$, $\mu(s) < 0$. Since $\alpha \in \SK_e$ is increasing with $\alpha(0) = 0$, it follows that, for almost every $s \in (t_2, t_1)$, $\dot\mu(s) \geq -\alpha(\mu(s)) \geq 0$. Absolute continuity of $\mu$ then yields
		\begin{equation}
			\mu(t_1) = \mu(t_2) + \int_{t_2}^{t_1} \dot\mu(s)\, \rmd s \geq 0, \nn
		\end{equation}
		which contradicts $\mu(t_1) < 0$. Hence no escape time $\bar t$ exists, and $\SC$ is forward invariant with respect to the closed-loop system.
	\end{proof}
	
	\begin{remark}\rm
		\label{rem:cnf_enforcement}
		Theorem~\ref{thm:cnf_invariance} completes the Boolean extension: Theorem~\ref{thm:composition} locates the extracted gradient in the active-gradient polytope, and Theorem~\ref{thm:cnf_invariance} certifies forward invariance. The enforcement rule follows the tree structure: every $\delta$-active clause is enforced, and, within each clause, only the routed leaf is enforced, since forward invariance of a union follows from forward invariance of one member. The QP therefore carries at most $|\SA^{\rmc}_\delta(x)| \leq q$ rows, and the clause values $M_c(x)$ and the routed Lie derivatives are intermediates of the $m+1$ seeded passes of Section~\ref{sec:QP}, so Proposition~\ref{prop:complexity} applies with $C_h$ the cost of evaluating \eqref{eq:cnf_h}. Switching of the routed leaf within a clause requires no hysteresis: the proof of Theorem~\ref{thm:cnf_invariance} accounts for it through the derivative of the clause value at its points of differentiability. The conjunctive representation of a given $\Phi \in \mathcal{F}_p$ may enlarge the expression; for specifications posed in conjunctive form, the bound applies to the given representation. Hybrid formulations of Boolean compositions are developed in \cite{glotfelter2019hybrid}; Theorem~\ref{thm:cnf_invariance} obtains the guarantee for conjunctive-form specifications within a single continuous formulation. \exampletriangle
	\end{remark}
	
	\subsection{Higher Relative Degree via Truncated Polynomial Duals}
	
	If the system has relative degree $r > 1$, then the control input does not appear in the first $r-1$ time derivatives of the safety margin; the safety condition instead involves the iterated Lie derivatives $L_f^j h$ for $j \in \{1,\ldots,r\}$. We extend the dual-number ring so that this entire sequence is produced by a single evaluation of $h$.
	
	\begin{definition}
		\label{def:hyper_dual}
		For any relative degree $r \in \BBZ_+$, the \textit{truncated polynomial dual ring} of order $r$ is the quotient structure
		\begin{equation}
			\BBD_{(r)} \triangleq \BBR[\epsilon] / (\epsilon^{r+1}). \nn
		\end{equation}
		Every element $z \in \BBD_{(r)}$ is uniquely represented by the polynomial $z = \sum_{j=0}^r a_j \epsilon^j$ with $a_j \in \BBR$, subject to the strict nilpotent condition $\epsilon^{r+1} = 0$, while $\epsilon^j \neq 0$ for all $j \leq r$.
	\end{definition}
	
	The structure $\BBD_{(r)}$ is a local ring whose unique maximal ideal is generated by $\epsilon$, and arithmetic follows polynomial multiplication modulo $\epsilon^{r+1}$. The dual-number ring of Definition~\ref{def:dual_numbers} is the case $\BBD_{(1)} = \BBD$. The \textit{dual extension} of a $C^r$ function $\phi \colon \BBR \to \BBR$ to $\BBD_{(r)}$ is defined for $z = \sum_{k=0}^r a_k \epsilon^k$ by the truncated Taylor expansion
	\begin{equation}
		\tilde\phi(z) \triangleq \sum_{j=0}^{r} \frac{\phi^{(j)}(a_0)}{j!}\, (z - a_0)^{j} \pmod{\epsilon^{r+1}}, \label{eq:phi_truncated}
	\end{equation}
	where $(z - a_0)^j$ is evaluated within $\BBD_{(r)}$. The composition rule of Lemma~\ref{lem:composition} extends to $\BBD_{(r)}$ for all $r \geq 1$ by induction on the operational syntax tree.
	
	The following theorem shows that seeding this higher-order arithmetic with the Taylor jet of the flow returns the complete sequence of iterated Lie derivatives.
	
	\begin{theorem}
		\label{thm:higher_rel_deg}
		Let $h \colon \SD \to \BBR$ be $r$-times continuously differentiable, and let the system vector field $f \colon \SD \to \BBR^n$ be $(r-1)$-times continuously differentiable. Define the kinematic state iterates as
		\begin{equation}
			f^{[0]}(x) \triangleq x, \quad f^{[j]}(x) \triangleq \nabla f^{[j-1]}(x)\, f(x), \label{eq:flow_jets}
		\end{equation}
		where $j \in \{1,\ldots,r\}$. Define the \textit{order-$r$ dual flow seed} vector
		\begin{equation}
			x_{\BBD_{(r)},f} \triangleq \sum_{j=0}^r \frac{1}{j!}\, f^{[j]}(x)\, \epsilon^j \in \BBD_{(r)}^n. \label{eq:hyper_seed}
		\end{equation}
		Then, the iterated Lie series is given by
		\begin{equation}
			\tilde h\bigl(x_{\BBD_{(r)},f}\bigr) = \sum_{j=0}^r \frac{1}{j!}\, L_f^j h(x)\, \epsilon^j. \label{eq:hyper_result}
		\end{equation}
		Specifically, for all $j\in\{0,\ldots,r\}$, $L_f^j h(x) = c_jj! $, where $c_j$ is the scalar coefficient of $\epsilon^j$ in the exact evaluated polynomial $\tilde h\bigl(x_{\BBD_{(r)},f}\bigr)$.
	\end{theorem}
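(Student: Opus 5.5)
The plan is to read $\epsilon$ as a formal time variable $t$ truncated at order $r$. I will show two things. First, the seed \eqref{eq:hyper_seed} is the order-$r$ Taylor polynomial of the flow of $f$ through $x$. Second, the $\BBD_{(r)}$-evaluation of $h$ at a seed with nilpotent part is the order-$r$ Taylor polynomial of $h$ composed with a curve having that jet. The coefficients of $h\circ$flow are the iterated Lie derivatives divided by $j!$, and uniqueness of Taylor coefficients then gives \eqref{eq:hyper_result}.

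\emph{Step 1 (flow jet).} Let $\gamma(t)$ be the solution of $\dot\gamma = f(\gamma)$ with $\gamma(0)=x$, defined on a small interval. Since $f\in C^{r-1}$, bootstrapping on $\dot\gamma=f(\gamma)$ gives $\gamma\in C^r$. I will prove by induction on $j$ that $\gamma^{(j)}(t) = f^{[j]}(\gamma(t))$ for $j\in\{0,\ldots,r\}$. The inductive step is the chain rule:
\[
\tfrac{\rmd}{\rmd t} f^{[j-1]}(\gamma(t)) = \nabla f^{[j-1]}(\gamma(t))\, f(\gamma(t)) = f^{[j]}(\gamma(t)).
\]
This needs $f^{[j-1]}\in C^1$, which holds since $f^{[j-1]}$ involves at most $j-2$ derivatives of $f$ and $j-1\le r-1$. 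Hence $\gamma(t) = \sum_{j=0}^r \frac{1}{j!} f^{[j]}(x)\,t^j + o(t^r)$, and its polynomial part is exactly the seed with $\epsilon\mapsto t$. The same chain-rule induction gives $\frac{\rmd^j}{\rmd t^j} h(\gamma(t)) = L_f^j h(\gamma(t))$. So the order-$r$ Taylor polynomial of $h\circ\gamma$ at $0$ is $\sum_{j=0}^r \frac{1}{j!}L_f^jh(x)\,t^j$.

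\emph{Step 2 (dual evaluation equals jet composition).} Extending \eqref{eq:phi_truncated} to several variables, as in Remark~\ref{rem:multivar} together with the composition rule for $\BBD_{(r)}$, for $z = x + w$ with $w\in\BBD_{(r)}^n$ having zero real part gives
\[
\tilde h(x+w) = \sum_{k=0}^r \tfrac{1}{k!} D^k h(x)[w]^k \pmod{\epsilon^{r+1}}.
\]
Separately, write $\gamma(t) - x = P(t) + o(t^r)$, where $P$ is the seed's nilpotent part with $\epsilon\mapsto t$. Taylor's theorem for $h\in C^r$ gives
\[
h(\gamma(t)) = \sum_{k=0}^r \tfrac{1}{k!} D^k h(x)[\gamma(t)-x]^k + o(\|\gamma(t)-x\|^r).
\]
Here $\|\gamma(t)-x\| = O(t)$. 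Substituting $P(t)+o(t^r)$ into each multilinear term, the cross terms involving the $o(t^r)$ part are $o(t^r)$ because $P(0)=0$. Thus $h(\gamma(t))$ equals the polynomial $\sum_k \frac{1}{k!}D^kh(x)[P(t)]^k$ plus $o(t^r)$. Truncating that polynomial at degree $r$ is precisely the $\BBD_{(r)}$ computation.

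\emph{Step 3 (conclusion).} By uniqueness of Taylor polynomials, the degree-$\le r$ coefficients of $\tilde h(x_{\BBD_{(r)},f})$ coincide with those from Step 1. This gives \eqref{eq:hyper_result}, and the statement $L_f^jh(x) = c_j\, j!$ follows directly.

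The main obstacle is Step 2 at low regularity. The identification of the ring evaluation with the multivariate truncated Taylor formula, and the control of the $o(t^r)$ remainders when only $h\in C^r$ and $f\in C^{r-1}$ are assumed, both need care. The route I will take first is the remainder argument above, which avoids Faà di Bruno bookkeeping. If the multivariate extension of \eqref{eq:phi_truncated} needs justification, I will derive it from the scalar definition via the composition rule, applied to $\phi(s)=h(x+s\,w_{\Re})$-type restrictions.
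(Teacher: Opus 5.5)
Your proposal is correct and follows essentially the same route as the paper: identify the seed with the order-$r$ Taylor polynomial of the flow, write the $\mathbb{D}_{(r)}$ evaluation as the truncated multivariate Taylor expansion $\sum_{k=0}^{r}\frac{1}{k!}D^k h(x)[P_r(\epsilon)-x]^k$, and match coefficients against the Lie series of $h\circ\varphi_t$. Your explicit estimate that the cross terms are $o(t^r)$, followed by uniqueness of the degree-$r$ Taylor polynomial, is a more careful version of the step the paper states loosely as the remainder $o(t^r)$ ``mapping to a multiple of $\epsilon^{r+1}=0$.''
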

	
	\begin{proof}
		For all $t \ge 0$, let $\varphi_t \colon \SD \to \SD$ denote the flow satisfying $\frac{\rmd}{\rmd t}\varphi_t(x) = f(\varphi_t(x))$ with $\varphi_0(x) = x$. Since $f$ is $(r-1)$-times continuously differentiable, the map $t \mapsto \varphi_t(x)$ is $r$-times continuously differentiable, and differentiating the flow identifies its time derivatives with the spatial iterates of \eqref{eq:flow_jets}:
		\begin{equation}
			\left.\frac{\rmd^j}{\rmd t^j}\varphi_t(x)\right|_{t=0} = f^{[j]}(x), \quad j \in \{0,\ldots,r\}. \label{eq:proof_jet}
		\end{equation}
		Let $P_r(t) \triangleq \sum_{j=0}^{r} \frac{1}{j!}\, f^{[j]}(x)\, t^j$ denote the order-$r$ Taylor polynomial of the trajectory at $t=0$. The dual seed \eqref{eq:hyper_seed} is this polynomial evaluated at the nilpotent variable:
		\begin{equation}
			x_{\BBD_{(r)},f} = P_r(\epsilon). \label{eq:proof_seed_poly}
		\end{equation}
		Because $h \in C^r$ and $t \mapsto \varphi_t(x) \in C^r$, the composition $h \circ \varphi_t$ is $r$-times continuously differentiable, and the chain rule gives $\frac{\rmd^j}{\rmd t^j} h(\varphi_t(x))|_{t=0} = L_f^j h(x)$ for $j \in \{0,\ldots,r\}$. Taylor's theorem then yields, as $t \to 0$,
		\begin{equation}
			h(\varphi_t(x)) = \sum_{j=0}^{r} \frac{1}{j!}\, L_f^j h(x)\, t^j + o(t^r). \label{eq:proof_lie_series}
		\end{equation}
		The dual extension \eqref{eq:phi_truncated} evaluates $\tilde h$ as the order-$r$ multivariable Taylor expansion of $h$ reduced modulo $\epsilon^{r+1}$. Since $P_r(\epsilon) - x = \sum_{j=1}^{r} \frac{1}{j!} f^{[j]}(x)\,\epsilon^j$ has no constant term, its powers above order $r$ vanish in $\BBD_{(r)}$, and
		\begin{equation}
			\tilde h\bigl(P_r(\epsilon)\bigr) = \sum_{k=0}^r \frac{1}{k!}\, D^k h(x) \bigl[ P_r(\epsilon) - x \bigr]^k \pmod{\epsilon^{r+1}}. \label{eq:proof_algebraic_comp}
		\end{equation}
		The right-hand side of \eqref{eq:proof_algebraic_comp} is the order-$r$ truncation of the Taylor expansion of $h \circ \varphi_t$ with $t$ replaced by $\epsilon$; matching it term by term with \eqref{eq:proof_lie_series}, in which the remainder $o(t^r)$ maps to a multiple of $\epsilon^{r+1} = 0$, the coefficient of $\epsilon^j$ equals $\frac{1}{j!} L_f^j h(x)$ for each $j \in \{0,\ldots,r\}$, which is \eqref{eq:hyper_result}.
	\end{proof}
	
	\begin{remark}\rm
		\label{rem:ECBF_extension}
		For a system of relative degree $r$, the high-order CBF constructions \cite{xiao2019high,xiao2022high} cascade the envelopes
		\begin{equation}
			\psi_0(x) \triangleq h(x), \quad \psi_k(x) \triangleq \dot\psi_{k-1}(x) + \alpha_k(\psi_{k-1}(x)), \nn
		\end{equation}
		for $k \in \{1,\ldots,r\}$ with $\alpha_k \in \SK_e$; the exponential CBF (ECBF) construction \cite{nguyen2016exponential} corresponds to linear functions $\alpha_k$ obtained by pole placement. The actionable condition $\psi_r(x,u) \geq 0$ is affine in $u$ through the single coupling term $\Lf{G}{L_f^{r-1} h}(x)$. Theorem~\ref{thm:higher_rel_deg} supplies the drift terms $L_f h(x), \ldots, L_f^{r} h(x)$ from the single seed $x_{\BBD_{(r)},f}$. The coupling term $\Lf{G}{L_f^{r-1} h}(x)$ is not available from the seed \eqref{eq:hyper_seed}: Theorem~\ref{thm:higher_rel_deg} returns the values $L_f^j h(x)$, whereas lifting $L_f^{r-1} h$ along an input column requires the derivative of the map $x \mapsto L_f^{r-1} h(x)$, which the univariate arithmetic does not represent. Proposition~\ref{prop:coupling} below extracts the coupling by a bivariate truncated-dual evaluation that seeds the drift jet and one input column simultaneously; in the relative-degree-one case it reduces to the control seeds of Remark~\ref{rem:control_seed}. When $h$ is a pointwise minimum or a conjunctive-form composition \eqref{eq:cnf_h}, the routing of Theorems~\ref{thm:vertex_selection} and \ref{thm:composition} is applied at the outer reduction, and the simultaneous enforcement of Theorems~\ref{thm:forward_invariance} and \ref{thm:cnf_invariance} is imposed on each $\delta$-active constraint or clause. \exampletriangle
	\end{remark}

	\begin{definition}
		\label{def:bivariate_ring}
		For $r \in \BBZ_+$, the \textit{bivariate truncated dual ring} is the quotient
		\begin{equation}
			\BBD_{(r-1,1)} \triangleq \BBR[\epsilon_1,\epsilon_2]/(\epsilon_1^{r},\, \epsilon_2^{2}). \nn
		\end{equation}
		Every element is uniquely represented as $z = \sum_{k=0}^{r-1} (a_k + b_k \epsilon_2)\, \epsilon_1^k$ with $a_k, b_k \in \BBR$. Equivalently, $\BBD_{(r-1,1)}$ is the truncated polynomial ring of order $r-1$ in $\epsilon_1$ whose coefficients lie in the dual ring $\BBD = \BBR[\epsilon_2]/(\epsilon_2^2)$; for $\phi \in C^r$, the dual extension of $\phi$ to $\BBD_{(r-1,1)}$ is \eqref{eq:phi_truncated} of order $r-1$, with each coefficient $\phi^{(j)}(a_0)$ evaluated through its $\epsilon_2$-dual extension \eqref{eq:dual_ext_scalar} at the $\epsilon_1$-constant term $a_0 + b_0 \epsilon_2$.
	\end{definition}
	
	\begin{proposition}
		\label{prop:coupling}
		Let $h \colon \SD \to \BBR$ be $r$-times continuously differentiable, let $f \colon \SD \to \BBR^n$ be $(r-1)$-times continuously differentiable, let $G \colon \SD \to \BBR^{n \times m}$ be continuous, and, for $j \in \{1,\ldots,m\}$, let $g \triangleq \col_j(G(x))$. Define the \textit{bivariate seed}
		\begin{equation}
			z_g \triangleq \sum_{k=0}^{r-1} \frac{1}{k!}\, \tilde f^{[k]}\bigl(x + g\, \epsilon_2\bigr)\, \epsilon_1^k \in \BBD_{(r-1,1)}^n, \label{eq:bivariate_seed}
		\end{equation}
		where $\tilde f^{[k]}$ is the $\epsilon_2$-dual extension \eqref{eq:dual_ext_vec} of the iterate $f^{[k]}$ in \eqref{eq:flow_jets}. Then,
		\begin{equation}
			\tilde h(z_g) = \sum_{k=0}^{r-1} \frac{1}{k!} \Bigl( L_f^k h(x) + \Lf{g}{L_f^k h}(x)\, \epsilon_2 \Bigr)\, \epsilon_1^k. \label{eq:bivariate_result}
		\end{equation}
		In particular, $\Lf{\col_j(G)}{L_f^{r-1} h}(x) = (r-1)!\, b_{r-1}$, where $b_{r-1}$ is the coefficient of $\epsilon_1^{r-1} \epsilon_2$ in $\tilde h(z_g)$.
	\end{proposition}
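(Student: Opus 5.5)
The plan is to reduce the bivariate claim to two applications of results already established: the univariate jet result of Theorem~\ref{thm:higher_rel_deg} at order $r-1$, and the first-order dual result of Theorem~\ref{thm:Lie_extraction}. The link between them is the representation of $\BBD_{(r-1,1)}$ as the truncated ring $\BBD_{(r-1)}$ with coefficients in $\BBD$. The guiding identity is that $\epsilon_2$-dual extension commutes with the $\epsilon_1$-jet construction. I would introduce the perturbed point $y(s) \triangleq x + s g$ for real $s$, with $g$ frozen as the constant vector $\col_j(G(x))$. Continuity of $G$ alone then suffices, since $G$ is never differentiated.

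The first step is to apply Theorem~\ref{thm:higher_rel_deg} with order $r-1$ at the real point $y(s)$. This uses $h \in C^{r-1}$ and $f \in C^{r-2}$, both implied by the hypotheses. For every $s$ it gives
\begin{equation}
\tilde h\Bigl(\sum_{k=0}^{r-1} \tfrac{1}{k!} f^{[k]}(y(s))\,\epsilon_1^k\Bigr) = \sum_{k=0}^{r-1} \tfrac{1}{k!} L_f^k h(y(s))\,\epsilon_1^k. \nn
\end{equation}
Each coefficient on both sides is a real function of $s$. On the right, the coefficient of $\epsilon_1^k$ is $s \mapsto L_f^k h(x+sg)/k!$, which is $C^1$ because $L_f^k h \in C^{r-k}$ for $k \le r-1$. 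On the left, the $\epsilon_1$-coefficients of $\tilde h$ are, by \eqref{eq:phi_truncated} and the multivariable expansion \eqref{eq:proof_algebraic_comp}, polynomial expressions in the derivatives $D^k h(y)$, $k\le r-1$, and the seed coefficients $f^{[k]}(y)$. All of these are $C^1$ in $y$ under $h \in C^r$ and $f\in C^{r-1}$; the exponent of $f$ needs care here, since $f^{[k]}$ with $k\le r-1$ requires $f \in C^{r-1}$ for one more derivative.

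The second step is to pass to the $\epsilon_2$-dual extension. Definition~\ref{def:bivariate_ring} evaluates each coefficient $\phi^{(j)}(a_0)$ through its $\epsilon_2$-dual extension at $a_0+b_0\epsilon_2$. By Lemma~\ref{lem:composition} and Remark~\ref{rem:multivar}, the bivariate evaluation $\tilde h(z_g)$ is therefore, coefficientwise in $\epsilon_1^k$, the $\epsilon_2$-dual extension of the $C^1$ map $y \mapsto [\epsilon_1^k]\,\tilde h(\sum_k f^{[k]}(y)\epsilon_1^k/k!)$, evaluated at $x + g\epsilon_2$. The seed $z_g$ in \eqref{eq:bivariate_seed} is exactly the $\epsilon_2$-extension of the seed map $y \mapsto \sum_k f^{[k]}(y)\epsilon_1^k/k!$ at $x+g\epsilon_2$. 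Theorem~\ref{thm:Lie_extraction}, applied coefficientwise to the identity from the first step, then gives that the $\epsilon_1^k$ coefficient equals $\tfrac{1}{k!}\bigl(L_f^k h(x) + \nabla(L_f^k h)(x)\,g\,\epsilon_2\bigr)$, which is \eqref{eq:bivariate_result}. Reading off the $\epsilon_1^{r-1}\epsilon_2$ coefficient and multiplying by $(r-1)!$ gives the final claim.

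The main obstacle is justifying the second step rigorously: that evaluating the order-$(r-1)$ truncated extension with $\BBD$-valued coefficients coincides with $\epsilon_2$-dual-extending the whole real order-$(r-1)$ computation. I would first argue this at the level of straight-line programs. Each elementary operation in $\BBD_{(r-1,1)}$ equals, for both ring operations and for \eqref{eq:phi_truncated}, the $\epsilon_2$-extension of the corresponding real $\BBD_{(r-1)}$-operation, because $\BBR[\epsilon_1,\epsilon_2]/(\epsilon_1^r,\epsilon_2^2)\cong \BBD_{(r-1)}\otimes\BBD$. Induction on the program, using Lemma~\ref{lem:composition}, then closes the argument. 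For a non-program $h$, I would instead use the explicit multivariable Taylor form and differentiate the identity from the first step in $s$ at $s=0$.
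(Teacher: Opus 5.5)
Your proposal is correct and follows essentially the same route as the paper. It applies Theorem~\ref{thm:higher_rel_deg} at order $r-1$ at the perturbed points $x+sg$. It then identifies the $\BBD_{(r-1,1)}$ evaluation with the $\epsilon_2$-dual extension of the real coefficient map of the $\BBD_{(r-1)}$ evaluation (the paper's $\Psi$), justified program-wise through Lemma~\ref{lem:composition}, and reads off the $\epsilon_2$-parts by the chain rule, which you phrase via Theorem~\ref{thm:Lie_extraction} applied to $\Psi$ composed with the seed map, with the same regularity bookkeeping $f^{[k]}, L_f^k h \in C^{r-k}$.
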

	
	\begin{proof}
		For $s \in \BBR$ in a neighborhood of $0$, define
		\begin{equation}
			\zeta(s) \triangleq \sum_{k=0}^{r-1} \frac{1}{k!}\, f^{[k]}(x + s g)\, \epsilon_1^k \in \BBD_{(r-1)}^n. \nn
		\end{equation}
		Theorem~\ref{thm:higher_rel_deg}, applied at the base point $x + s g$ with order $r-1$, yields
		\begin{equation}
			\tilde h\bigl(\zeta(s)\bigr) = \sum_{k=0}^{r-1} \frac{1}{k!}\, L_f^k h(x + s g)\, \epsilon_1^k. \label{eq:proof_biv_family}
		\end{equation}
		Since $h$ is $C^r$ and $f$ is $C^{r-1}$, the iterates satisfy $f^{[k]} \in C^{r-k}$ and $L_f^k h \in C^{r-k}$; hence, for all $k \leq r-1$, the maps $s \mapsto f^{[k]}(x + s g)$ and $s \mapsto L_f^k h(x + s g)$ are continuously differentiable.
		
		Let $\Psi \colon \BBR^{nr} \to \BBR^{r}$ denote the map that sends the $\epsilon_1$-coefficients of a seed in $\BBD_{(r-1)}^n$ to the $\epsilon_1$-coefficients of $\tilde h$ evaluated at that seed. Each component of $\Psi$ is a finite composition of the coefficient operations of the truncated polynomial arithmetic of Definition~\ref{def:hyper_dual} and of the derivatives $\phi^{(k)}$, $k \leq r-1$, of the $C^r$ primitives of $h$; each component of $\Psi$ is therefore continuously differentiable. By Definition~\ref{def:bivariate_ring}, evaluating $\tilde h$ over $\BBD_{(r-1,1)}$ performs every coefficient operation of the $\BBD_{(r-1)}$ evaluation in $\epsilon_2$-dual arithmetic; by Lemma~\ref{lem:composition} and Remark~\ref{rem:multivar}, the resulting composition equals the $\epsilon_2$-dual extension $\tilde\Psi$ of $\Psi$. By \eqref{eq:dual_ext_vec}, the $\epsilon_2$-parts of the output of $\tilde\Psi$ equal the derivative of $\Psi$ contracted with the $\epsilon_2$-parts of its argument.
		
		The seed \eqref{eq:bivariate_seed} has $\epsilon_1$-coefficients
		\begin{equation}
			\tfrac{1}{k!}\, \tilde f^{[k]}(x + g \epsilon_2) = \tfrac{1}{k!} \Bigl( f^{[k]}(x) + \tfrac{\rmd}{\rmd s} f^{[k]}(x + s g)\big|_{s=0}\, \epsilon_2 \Bigr), \nn
		\end{equation}
		that is, its $\epsilon_2$-parts are the derivatives at $s = 0$ of the $\epsilon_1$-coefficients of $\zeta(s)$. Applying the chain rule to the composition of $\Psi$ with $s \mapsto \zeta(s)$ and using \eqref{eq:proof_biv_family},
		\begin{equation}
			\tilde h(z_g) = \sum_{k=0}^{r-1} \frac{1}{k!} \Bigl( L_f^k h(x) + \tfrac{\rmd}{\rmd s} L_f^k h(x + s g)\big|_{s=0}\, \epsilon_2 \Bigr)\, \epsilon_1^k. \nn
		\end{equation}
		Finally, $\tfrac{\rmd}{\rmd s} L_f^k h(x + s g)\big|_{s=0} = \nabla\bigl(L_f^k h\bigr)(x)\, g = \Lf{g}{L_f^k h}(x)$, which is \eqref{eq:bivariate_result}; isolating $k = r-1$ yields the expression for the coupling.
	\end{proof}
	
	\begin{remark}\rm
		\label{rem:jet_recursion}
		The seed \eqref{eq:bivariate_seed} requires no symbolic differentiation. The iterates $\tilde f^{[k]}(x + g \epsilon_2)$ need not be formed from programs for $\nabla f^{[k-1]}$: the Taylor coefficients of the flow of $\dot x = f(x)$ obey the recursion
		\begin{equation}
			c_0 = x + g\, \epsilon_2, \qquad c_{k+1} = \frac{1}{k+1} \Bigl[\, \tilde f\Bigl(\textstyle\sum_{i=0}^{k} c_i\, \epsilon_1^i\Bigr) \Bigr]_{\epsilon_1^k}, \nn
		\end{equation}
		where $[\,\cdot\,]_{\epsilon_1^k}$ extracts the coefficient of $\epsilon_1^k$ and the arithmetic is carried out in $\BBD_{(r-1,1)}$ on the straight-line program of $f$ \cite[Chap.\ 13]{griewank2008evaluating}; then $c_k = \frac{1}{k!} \tilde f^{[k]}(x + g \epsilon_2)$. The same recursion with $c_0 = x$ produces the seed \eqref{eq:hyper_seed}. Every element of $\BBD_{(r-1,1)}$ carries $2r$ real coefficients, so each ring operation costs a number of real operations that depends on $r$ only; the coupling extraction therefore preserves the $O(C_h)$ bound of Proposition~\ref{prop:complexity}, with a constant that depends on $r$ and remains independent of $n$, $p$, and $|\SA(x)|$. Extraction of the row $\Lf{G}{L_f^{r-1} h}(x)$ requires one bivariate evaluation per input column, which mirrors the $m$ control seeds of Remark~\ref{rem:control_seed}. When $h$ is a pointwise minimum, the $\minRe$ reduction is applied to the bivariate evaluations as in Definition~\ref{def:dual_eval}, comparing the coefficients of $\epsilon_1^0 \epsilon_2^0$ only. \exampletriangle
	\end{remark}

	\section{QP-Based Safety Filter and Complexity}
	\label{sec:QP}
	
	This section presents the QP-based safety filter that implements the dual-algebraic forward-invariance result, and quantifies its computational cost. Figure~\ref{fig:pipeline} summarizes the data flow from the measured state to the safety-filter output.
	
	\begin{figure}[ht!]
		\centering
		\begin{tikzpicture}[
			>=Latex,
			node distance = 4mm and 3mm,
			every node/.style={font=\scriptsize},
			block/.style={draw, rounded corners=2pt, align=center,
				inner sep=3pt, minimum height=6mm},
			io/.style={block, fill=gray!8,    draw=black!60,        minimum width=20mm},
			seed/.style={block, fill=blue!6,  draw=blue!50!black,   minimum width=20mm},
			eval/.style={block, fill=orange!10, draw=orange!60!black, minimum width=20mm},
			reduce/.style={block, fill=green!10, draw=green!50!black,  minimum width=56mm},
			output/.style={block, fill=yellow!4, draw=black!40, minimum width=24mm, font=\scriptsize},
			qp/.style={block, fill=red!8,    draw=red!60!black,    minimum width=24mm},
			arr/.style={->, semithick, draw=black!70},
			]
			\node[io] (state) {state $x \in \SD$};
			\node[seed, below=8mm of state, xshift=-32mm]                  (sf)  {$x_{\BBD,f}$};
			\node[seed, below=8mm of state]                                (sg1) {$x_{\BBD,\col_1(G)}$};
			\node[seed, below=8mm of state, xshift= 32mm]                   (sgp) {$x_{\BBD,\col_m(G)}$};
			\node[eval, below=4mm of sf]   (ef)  {$\tilde h_i\big(x_{\BBD,f}\big)$};
			\node[eval, below=4mm of sg1]  (eg1) {$\tilde h_i\big(x_{\BBD,\col_1(G)}\big)$};
			\node[eval, below=4mm of sgp]  (egp) {$\tilde h_i\big(x_{\BBD,\col_m(G)}\big)$};
			\node at ($(sg1)!0.5!(sgp)$)   {$\cdots$};
			\node at ($(eg1)!0.5!(egp)$)   {$\cdots$};
			\node[reduce, below=6mm of eg1] (red) {$\minRe$ reduction over $i \in \{1,\ldots,p\}$; operands $\tilde h_i$ retained};
			\node[output, below=4mm of red, xshift=-19mm] (out1) {$\Re\colon\, h(x),\ \SA_\delta(x)$};
			\node[output, below=4mm of red, xshift= 19mm] (out2) {$\Du\colon\, \Lf{v}{h_i}(x),\ i \in \SA_\delta(x)$};
			\node[qp,  below=4mm of red, yshift=-10mm] (qp) {QP \eqref{eq:QP_simult}};
			\node[io,  below=4mm of qp] (u)  {$u_*(x)$};
			\draw[arr] (state.south) -- (sf.north);
			\draw[arr] (state.south) -- (sg1.north);
			\draw[arr] (state.south) -- (sgp.north);
			\draw[arr] (sf)  -- (ef);
			\draw[arr] (sg1) -- (eg1);
			\draw[arr] (sgp) -- (egp);
			\draw[arr] (ef.south)  -- (red.north);
			\draw[arr] (eg1.south) -- (red.north);
			\draw[arr] (egp.south) -- (red.north);
			\draw[arr] (red.south) -- (out1.north);
			\draw[arr] (red.south) -- (out2.north);
			\draw[arr] (out1.south) -- (qp.north);
			\draw[arr] (out2.south) -- (qp.north);
			\draw[arr] (qp) -- (u);
		\end{tikzpicture}
		\caption{Dual-algebraic safety filter pipeline. The state $x$ is lifted along $f$ and along each column $\col_j(G)$ of the input matrix (blue); the dual-extended barriers $\tilde h_i$ are evaluated on every seed for every $i \in \{1,\ldots,p\}$ (orange); the $\minRe$ reduction (green) returns $h(x)$ and retains its operands $\tilde h_i$; the real parts determine $\SA_\delta(x)$, and the dual parts supply the Lie derivatives of every $i \in \SA_\delta(x)$; these data feed the safety-filter QP \eqref{eq:QP_simult} (red), whose output is $u_*(x)$. By Proposition~\ref{prop:complexity}, the cost is $O(C_h)$ per seed, independent of $|\SA(x)|$ and of the local geometry of $\bd\SC$.}
		\label{fig:pipeline}
	\end{figure}
	
	\subsection{Safety-Filter QP}
	
	Given a nominal controller $u_{\rm n} \colon \SD \to \BBR^m$ and a margin $\delta > 0$, the \textit{dual-algebraic safety filter} solves
	\begin{align}
		u_*(x) = \argmin_{u \in \BBR^m} \quad & \|u - u_{\rm n}(x)\|^2 \label{eq_u_*}
	\end{align}
	such that, for all $i \in \SA_\delta(x)$,
	\begin{align}
		\Du\bigl(\tilde h_i(x_{\BBD,f})\bigr) + \sum_{j=1}^{m} \Du\bigl(\tilde h_i(x_{\BBD,\col_j(G)})\bigr)\, u_{(j)}  + \alpha(h_i(x)) \geq 0. \label{eq:QP_simult}
	\end{align}
	By Lemma~\ref{lem:dual_eval_structure}, the dual parts in \eqref{eq:QP_simult} equal $\Lf{f}{h_i}(x)$ and $\Lf{\col_j(G)}{h_i}(x)$, respectively. By Theorem~\ref{thm:forward_invariance}, the safe set $\SC$ is forward invariant with respect to the closed-loop system with all locally Lipschitz control $u_*$ satisfying \eqref{eq_u_*} subject to \eqref{eq:QP_simult}.
	
	Populating \eqref{eq:QP_simult} requires no evaluations beyond the $m+1$ seeded passes: the composite evaluation \eqref{eq:dual_eval} computes every operand $\tilde h_i(x_{\BBD,v})$ before the $\minRe$ reduction, so the $p$ pairs $\bigl(h_i(x),\, \Lf{v}{h_i}(x)\bigr)$ are intermediates of each pass and are retained in a buffer of $2p$ scalars per seed. The reduction determines which pair propagates to the composite output; it does not erase the remaining pairs. Reading $\SA_\delta(x)$ from the real parts and the coefficients of \eqref{eq:QP_simult} from the dual parts adds $O(p) \subseteq O(C_h)$ copies and comparisons, which preserves the operation bound of Proposition~\ref{prop:complexity} and the branch-free, allocation-free structure of the procedure.
	
	\begin{remark}\rm
		\label{rem:single_constraint}
		When the trajectory is bounded away from the non-smooth locus, the active set reduces to a singleton, $\SA_\delta(x) = \{i_*(x)\}$ for sufficiently small $\delta > 0$, and \eqref{eq:QP_simult} reduces to a single-constraint QP. The unique constraint is the one identified by the lexicographic selection of Lemma~\ref{lem:dual_eval_structure}, and the QP admits the closed-form solution
		\begin{equation}
			u_* = u_{\rm n} - \min\!\left\{0,\ \frac{a + b\, u_{\rm n}}{\|b\|^2}\right\} b^\rmT, \nn
		\end{equation}
		with
		\begin{align}
			a &\triangleq \Du\bigl(\tilde h_{i_*}(x_{\BBD,f})\bigr) + \alpha(h(x)), \nn\\
			b &\triangleq \Big[\Du\Bigl(\tilde h_{i_*}(x_{\BBD,\col_1(G)})\Bigr)~ \cdots~ \Du\Bigl(\tilde h_{i_*}\big(x_{\BBD,\col_m(G)}\big)\Bigr)\Big]. \nn
		\end{align}
		\exampletriangle
	\end{remark}

	\subsection{Computational Complexity}
	
	\begin{proposition}
		\label{prop:complexity}
		Let $C_h$ be the number of elementary arithmetic operations (i.e., additions, multiplications, and comparisons) required to evaluate the composite barrier $h(x)$ over $\BBR$. For any continuous vector field $v \colon \SD \to \BBR^n$, the dual extension $\tilde h(x_{\BBD,v})$ is evaluated in at most $4 C_h$ real operations, an overhead of at most $3 C_h$, independent of the state dimension $n$, the constraint count $p$, the active-set cardinality $|\SA(x)|$, and the direction $v$.
	\end{proposition}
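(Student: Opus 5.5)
The argument is a per-operation accounting over the straight-line program that evaluates $h$ over $\BBR$, combined with Remark~\ref{rem:multivar}. Fix such a program with $C_h$ elementary operations: additions (including subtractions), multiplications, and comparisons. It may also contain calls to smooth primitives $\phi$. By Remark~\ref{rem:multivar} and Lemma~\ref{lem:composition}, replacing every elementary operation by its dual counterpart yields a straight-line program computing $\tilde h(x_{\BBD,v})$. When $h$ is a pointwise minimum, the comparisons are realized by $\minRe$ as in Definition~\ref{def:dual_eval}, and by Lemma~\ref{lem:dual_eval_structure} the output is the correct dual value. The program graph is unchanged, so the dual program has exactly $C_h$ dual operations. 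It therefore suffices to bound the real cost of each dual operation by $4$.

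The key step is a case split over operation types, using Definition~\ref{def:dual_arith} and \eqref{eq:min_real}:
\begin{itemize}
\item \emph{Addition.} $(a_1+a_2)+(b_1+b_2)\epsilon$ costs $2$ real additions.
\item \emph{Multiplication.} $a_1a_2+(a_1b_2+a_2b_1)\epsilon$ costs $3$ multiplications and $1$ addition, so $4$.
\item \emph{Multiplication by a real constant.} This costs $2$.
\item \emph{Comparison $\minRe$ or $\maxRe$.} One real comparison on the real parts selects the pair, and the selection is a copy rather than an arithmetic operation. This costs $1$.
\end{itemize}
Hence every dual operation costs at most $4$ real operations. Summing over the program gives at most $4C_h$ operations in total, an overhead of at most $3C_h$.

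The seeding of $x_{\BBD,v}$ is not charged, because $v(x)$ is supplied as an input alongside $x$. The bound $4C_h$ depends only on the program of $h$. It is independent of $n$, since no gradient vector is ever formed and each dual scalar carries one tangent coefficient. It is independent of $v$, since $v$ enters only as input data. It is independent of $|\SA(x)|$ and of the routing, since the comparison cost is the same regardless of which operand is selected. The dependence on $p$ enters only through $C_h$ itself and not through the factor.

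The main obstacle is the treatment of non-arithmetic smooth primitives, such as $x^{-1}$ and transcendental functions, in the count. By Proposition~\ref{prop:ring}, the inverse $z^{-1}=\tfrac1a-\tfrac{b}{a^2}\epsilon$ can cost more than $4$ operations unless the real-side cost of $1/a$ is counted appropriately. I would first handle this by restricting the statement to the arithmetic primitives it enumerates: additions, multiplications, and comparisons. Any division would be expressed through these primitives in the real program, so that the dual program inherits the same decomposition and the per-operation bound of $4$ applies unchanged. If a general $\phi$ is allowed, I would instead charge $\phi$ and $\phi'$ as primitives, giving a cost of at most $1+1+1$ for the value, the derivative, and one multiplication. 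This still respects the factor of $4$ under the convention that $\phi'(a)$ is one primitive evaluation.
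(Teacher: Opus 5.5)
Your proposal is correct and matches the paper's proof: one pass over the straight-line program of $h$, with each real operation replaced by its dual counterpart and charged $2$ for addition, $4$ for multiplication and a small constant for the $\minRe$ comparison, so the worst per-operation factor of $4$ gives $4C_h$. The differences are cosmetic. The paper charges $\minRe$ as $2$ (one comparison plus the conditional copy of the two-field record) where you charge $1$. Your remarks on seeding and on non-arithmetic primitives go beyond what the paper addresses and are not needed for the stated scope.
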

	
	\begin{proof}
		The dual evaluation traverses the straight-line program of $h$ once, replacing each real operation by its dual counterpart, and we tally the cost of each. Dual addition $(a_1 + b_1\epsilon) + (a_2 + b_2\epsilon) = (a_1+a_2) + (b_1+b_2)\epsilon$ uses two real additions, a factor of two over the scalar baseline. Dual multiplication $(a_1 + b_1\epsilon)(a_2 + b_2\epsilon) = a_1 a_2 + (a_1 b_2 + a_2 b_1)\epsilon$ uses three real multiplications and one real addition, a factor of four. The reduction $\minRe(z_1, z_2)$ uses one real comparison and one conditional copy of a two-field record, a factor of two. The largest per-operation factor is four, so the dual evaluation uses at most $4 C_h$ real operations. None of these counts depends on $n$, $p$, $|\SA(x)|$, or $v$.
	\end{proof}
	
	\begin{remark}\rm
		\label{rem:vs_alternatives}
		By contrast, constructing the Clarke subdifferential $\partial h(x)$ explicitly requires $O(p)$ gradient evaluations followed by a convex-hull computation with worst-case cost $O(|\SA(x)|^{\lfloor n/2 \rfloor})$ \cite{barber1996quickhull}; the mixed-integer formulation of \cite{chen2018obstacle} introduces one binary variable per constraint, with worst-case solve time $O(2^p)$; the formulation of \cite{glotfelter2017nonsmooth} enforces one affine constraint per component function at every state, so the QP row count scales with $p$; and smooth surrogates require the selection of a parameter $\tau$, which trades approximation error against conditioning. The dual-algebraic evaluation avoids these costs while preserving exactness and determinism. \exampletriangle
	\end{remark}

	\section{Examples}
	\label{sec:examples}
	
	This section illustrates the framework on three examples of increasing scope. The mobile robot of Example~\ref{ex:rectangle} exercises vertex selection of Theorem~\ref{thm:vertex_selection} and forward invariance of Theorem~\ref{thm:forward_invariance} on a polytope, and resolves at the same geometry the failure exhibited by the counterexample of Example~\ref{ex:counter}. The multi-agent problem of Example~\ref{ex:multi_agent} scales the construction to $p = \binom{N}{2}$ constraints and the joint QP to $\BBR^{2N}$, exposing the $O(p)$ versus combinatorial cost of Proposition~\ref{prop:complexity}. The double integrator of Example~\ref{ex:doubleint} operates at relative degree two, combining the truncated-dual extraction of Theorem~\ref{thm:higher_rel_deg} with vertex selection and simultaneous enforcement at a non-smooth point. In every example the safety filter solves \eqref{eq_u_*}--\eqref{eq:QP_simult} with the Lie derivatives obtained from the dual-algebraic evaluation, and all numerical data needed to reproduce the figures are stated in full.
	
	\begin{example}
		\label{ex:rectangle}
		\rm
		Consider a mobile robot with single-integrator dynamics on $\BBR^2$,
		\begin{equation}
			\dot x = u. \label{eq:ex1_dynamics}
		\end{equation}
		The safe set is the closed rectangle $[0, \ell_1] \times [0, \ell_2]$, encoded by
		\begin{equation}
			h(x) = \min_{i\in\{1,\ldots,4\}}h_i(x), \nn
		\end{equation}
		where $h_1(x) = x_{(1)}$, $h_2(x) = \ell_1 - x_{(1)}$, $h_3(x) = x_{(2)}$, $h_4(x) = \ell_2 - x_{(2)}$, with constant gradients $\nabla h_1 = [1~\, 0]$, $\nabla h_2 = [-1~\, 0]$, $\nabla h_3 = [0~\, 1]$, $\nabla h_4 = [0~\, -1]$. The system has $f \equiv 0$ and $G \equiv I_2$; thus $\col_1(G) = e_1$ and $\col_2(G) = e_2$.
		
		At the corner $x = (0,0)$, the active set is $\SA(x) = \{1,3\}$ and the Clarke subdifferential is $\partial h(x) = \Co\{[1~0],\, [0~1]\}$. The dual seeds for the control columns are $x_{\BBD,e_1} = x + e_1 \epsilon$ and $x_{\BBD,e_2} = x + e_2 \epsilon$. Setting $\ell_1 = \ell_2 = 1$ and evaluating the dual extension of each $h_i$ at $x_{\BBD,e_1}$ yields
		\begin{align}
			\tilde h_1(x_{\BBD,e_1}) &= 0 + 1\cdot\epsilon, &\tilde h_2(x_{\BBD,e_1}) &= 1 - 1\cdot\epsilon, \nn \\
			\tilde h_3(x_{\BBD,e_1}) &= 0 + 0\cdot\epsilon, &\tilde h_4(x_{\BBD,e_1}) &= 1 + 0\cdot\epsilon. \nn
		\end{align}
		The real parts $(0, 1, 0, 1)$ have minimum $0$, attained by indices $1$ and $3$. The left-to-right reduction $\minRe$ retains index $1$, yielding $\tilde h(x_{\BBD,e_1}) = 0 + 1 \cdot \epsilon$ and hence $\Lf{e_1}{h_{i_*}}(x) = 1$, that is, $\nabla h_1(x) \cdot e_1 = 1$. By Theorem~\ref{thm:vertex_selection}, the gradient $\nabla h_1(x) = [1~0]$ is a vertex of $\partial h(0)$, as expected. As Example~\ref{ex:counter} cautions, the single-vertex enforcement here would leave $u_{(2)}$ unconstrained and admit $u = (0,-1)$, which immediately violates $h_3$. The simultaneous QP \eqref{eq:QP_simult} with $\delta = 0$ enforces both
		\begin{equation}
			u_{(1)} + \alpha(0) \geq 0, \qquad u_{(2)} + \alpha(0) \geq 0, \nn
		\end{equation}
		and thereby renders $\SC$ forward invariant for all locally Lipschitz nominal controllers, as stated in Theorem~\ref{thm:forward_invariance}.
		
		At an interior point such as $x = (0.1, 0.5)$, the active set is $\SA(x) = \{1\}$, the dual evaluation uniquely returns $\tilde h(x_{\BBD,e_1}) = 0.1 + 1 \cdot \epsilon$, and the smooth single-constraint CBF behavior is recovered.
		
		\textit{Numerical simulation.} We integrate \eqref{eq:ex1_dynamics} on $\SC = [0~1]^2$ under the dual-algebraic safety filter \eqref{eq:QP_simult} from $x(0) = (0.5, 0.5)$ over $T = 20\,\mathrm{s}$ with explicit Euler at step $\Delta t = 10^{-3}\,\mathrm{s}$. The class-$\SK_e$ function is $\alpha(h) = 5 h$, the $\delta$-active margin is $\delta = 5 \times 10^{-2}$, and the nominal controller is the proportional law $u_{\rm n}(x,t) = 0.5\bigl(x_{\rm ref}(t) - x\bigr)$, where the reference $x_{\rm ref}(t)$ steps every $5\,\mathrm{s}$ through the waypoint sequence $(-0.5,-0.5)$, $(1.5,-0.5)$, $(1.5,1.5)$, $(-0.5,1.5)$. Each waypoint lies outside $\SC$, so the safety filter must engage along every edge and at every corner.
		
		Figure~\ref{fig:rect_phase} shows the closed-loop trajectory: the robot crosses the interior toward the bottom-left corner, then slides counterclockwise along the perimeter, transiently activating two constraints at every corner. The minimum of $h(x(t))$ over the run is $-1.1 \times 10^{-17}$, at the level of double-precision floating-point error, consistent with the forward invariance of Theorem~\ref{thm:forward_invariance}. Figure~\ref{fig:rect_h_active} plots the four constraint values $h_i(x(t))$ and the $\delta$-active cardinality $|\SA_\delta(x(t))|$. The cardinality equals two at every corner sojourn, exactly the non-smooth events at which single-vertex enforcement fails (See Example~\ref{ex:counter}), and equals one on the smooth edges of $\bd\SC$. \exampletriangle
	\end{example}
	
	\begin{figure}[t!]
		\centering
		\includegraphics[width=0.7\columnwidth]{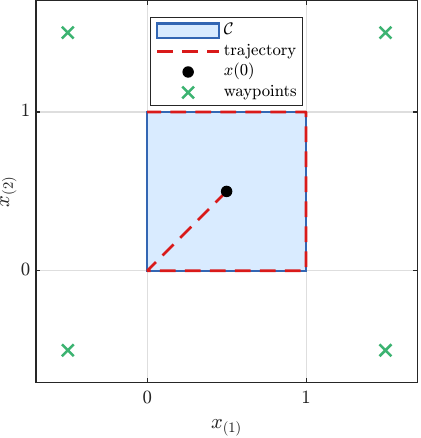}
		\caption{Closed-loop trajectory of Example~\ref{ex:rectangle}. The nominal controller tracks four waypoints $\times$ placed outside $\SC = [0,1]^2$; the dual-algebraic safety filter \eqref{eq:QP_simult} renders $\SC$ forward invariant. The trajectory slides along each edge of $\SC$ (i.e., smooth active set) and crosses each corner (i.e., non-smooth active set, two simultaneously tight constraints).}
		\label{fig:rect_phase}
	\end{figure}
	
	\begin{figure}[ht!]
		\centering
		\includegraphics[width=0.95\columnwidth]{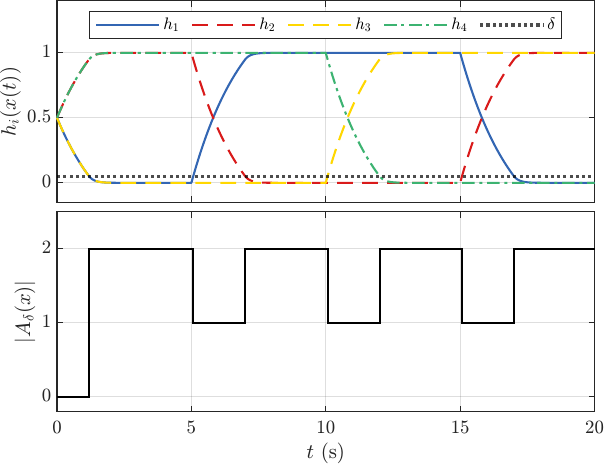}
		\caption{Constraint values and active-set cardinality for Example~\ref{ex:rectangle}. Top: the four barriers $h_i(x(t))$, each nonnegative throughout, with the dotted line at the margin $\delta$. Bottom: the $\delta$-active cardinality $|\SA_\delta(x(t))|$, which equals two at the four corner crossings, namely, $t \approx 0, 5, 10, 15\,\mathrm{s}$, and one along the edges.}
		\label{fig:rect_h_active}
	\end{figure}

	\begin{example}
		\label{ex:multi_agent}
		\rm
		Consider $N$ planar agents with single-integrator dynamics
		\begin{equation}
			\dot x_i = u_i, \nn
		\end{equation}
		where $x_i \in \BBR^2$ and $u_i\in\BBR^2$ are, respectively, the position and control input of agent $i \in \{1, \ldots, N\}$. Let $x = [x_1^\rmT~ \cdots~ x_N^\rmT]^\rmT \in \BBR^{2N}$. The pairwise collision-avoidance constraint between agents $i$ and $j$ is
		\begin{equation}
			h_{i,j}(x) \triangleq \|x_i - x_j\|^2 - d_{\min}^2, \nn
		\end{equation}
		where $d_{\min} > 0$. The composite barrier is
		\begin{equation}
			h(x) = \min_{1 \le i < j \le N} h_{i,j}(x), \nn
		\end{equation}
		and is non-smooth at all states where two or more inter-agent distance constraints are simultaneously tight. For example, if $\|x_1 - x_2\| = \|x_1 - x_3\| = d_{\min}$ with $x_2 \neq x_3$, then $\SA(x) = \{(1,2),\, (1,3)\}$.
		
		The stacked control matrix is $G = I_{2N}$, and thus, for all $k \in \{1,\ldots,2N\}$, $\col_k(G) = e_k$. For each pair, the Lie derivatives are
		\begin{gather}
			\Lf{f}{h_{i,j}}(x) = 0,\nn \\ \nabla_{x_i} h_{i,j}(x) = 2(x_i - x_j)^\rmT, \quad \nabla_{x_j} h_{i,j}(x) = -2(x_i - x_j)^\rmT, \nn
		\end{gather}
		and are extracted by the dual-algebraic evaluation without symbolic differentiation. With $N$ agents and $p = \binom{N}{2}$ pairwise constraints, the dual evaluation of the composite barrier requires $O(p) = O(N^2)$ dual operations and $O(p)$ comparisons. At a multiple-way collision event, this compares with the $O(p^{N})$ worst-case cost of constructing the Clarke subdifferential polytope of the composite barrier in $\BBR^{2N}$ \cite{barber1996quickhull}. Theorem~\ref{thm:forward_invariance} guarantees that the joint safe set is forward invariant under the simultaneous QP \eqref{eq:QP_simult}.
		
		\textit{Numerical simulation.} We simulate $N = 3$ agents with $d_{\min} = 0.7$ performing an antipodal position swap: the agents start at the vertices of an equilateral triangle of circumradius $2.2$ and are assigned goals at the antipodal vertices. For each agent $i\in\{1,2,3\}$, the nominal controller is $u_{i,\rm n} = x_{i,\rm goal} - x_i$, where $x_{i,\rm goal}$ is the antipodal position of $x_i$. The class-$\SK_e$ function is $\alpha(h) = 5h$, the margin is $\delta = 5 \times 10^{-2}$, and the dynamics are integrated by explicit Euler over $T = 20\,\mathrm{s}$ at step $\Delta t = 10^{-3}\,\mathrm{s}$. The joint safety filter \eqref{eq:QP_simult} solves a single QP in $\BBR^{6}$ at each step. Figure~\ref{fig:multi_sim} shows the agent trajectories and the three inter-agent distances. All agents reach their goals, and the minimum inter-agent distance over the run is $0.70 = d_{\min}$: the symmetric three-way conflict at the center is resolved by a coordinated rotation that holds every pair exactly on the safety boundary. \exampletriangle
	\end{example}
	
	\begin{figure}[t!]
		\centering
		\includegraphics[width=0.7\columnwidth]{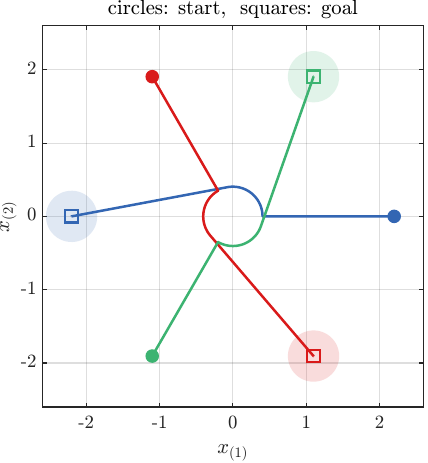}\\[0.3em]
		\includegraphics[width=0.92\columnwidth]{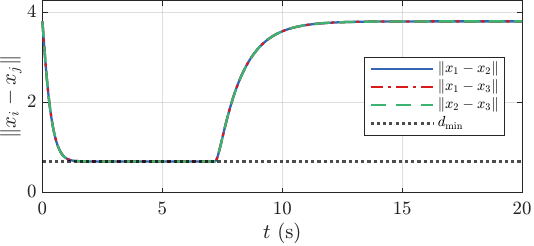}
		\caption{Closed-loop simulation of Example~\ref{ex:multi_agent} with $N = 3$ agents. Top: trajectories from start $\circ$ to goal $\square$; shaded disks have radius $d_{\min}/2$, so disjoint disks certify separation. Bottom: the three inter-agent distances remain above $d_{\min}$  at all times; the simultaneous dip near $t \approx 1\,\mathrm{s}$ is the multi-way conflict at the center, resolved by the joint dual-algebraic QP.}
		\label{fig:multi_sim}
	\end{figure}

	\begin{example}
		\label{ex:doubleint}
		\rm
		Consider a planar double integrator with state $x = [q^\rmT~\, v^\rmT]^\rmT \in \BBR^4$, where $q \in \BBR^2$ is position and $v \in \BBR^2$ is velocity, and acceleration control $u \in\BBR^2$,
		\begin{equation}
			\dot q = v, \qquad \dot v = u. \label{eq:di_dynamics}
		\end{equation}
		This has the form \eqref{eq:system} with
		\begin{equation}
			f(x) = \begin{bmatrix} v \\ 0 \end{bmatrix}, \qquad G = \begin{bmatrix} 0 \\ I_2 \end{bmatrix}, \nn
		\end{equation}
		where $m = 2$. For all $i \in \{1,\ldots,p\}$, the safe set is the complement of $p$ disk-shaped obstacles centered at $o_i \in \BBR^2$ with radius $r_i>0$,
		\begin{equation}
			h(x) = \min_{i \in \{1,\ldots,p\}} h_i(x), \qquad h_i(x) = \|q - o_i\|^2 - r_i^2, \label{eq:di_h}
		\end{equation}
		which is non-smooth at every state equidistant, in the $h_i$ sense, from two or more obstacles. Differentiating $h_i$ along \eqref{eq:di_dynamics},
		\begin{align}
			\Lf{f}{h_i}(x)        &= 2(q - o_i)^\rmT v, &
			\Lf{G}{h_i}(x)        &= 0, \nn \\
			L_f^2 h_i(x)          &= 2\|v\|^2, &
			\Lf{G}{L_f h_i}(x)    &= 2(q - o_i)^\rmT. \label{eq:di_lie}
		\end{align}
		Since $\Lf{G}{h_i} = 0$ while $\Lf{G}{L_f h_i} \neq 0$, each $h_i$ has relative degree two with respect to $u$. In contrast to a polytopic barrier, the second drift derivative $L_f^2 h_i = 2\|v\|^2$ is generically \emph{nonzero}, so the order-two seed of Theorem~\ref{thm:higher_rel_deg} is genuinely required: with $f^{[2]}(x) = 0$ for \eqref{eq:di_dynamics}, the seed $x_{\BBD_{(2)},f} = x + f(x)\,\epsilon$ produces
		\begin{equation}
			\tilde h_i\bigl(x_{\BBD_{(2)},f}\bigr) = h_i(x) + 2(q - o_i)^\rmT v\, \epsilon + \|v\|^2 \epsilon^2, \nn
		\end{equation}
		from which $L_f h_i$ and $L_f^2 h_i = 2\|v\|^2$ are recovered as $1!$ and $2!$ times the coefficients of $\epsilon$ and $\epsilon^2$, in a single forward evaluation, with the lexicographic routing of Theorem~\ref{thm:vertex_selection} applied at the outer $\min$. The control coupling $\Lf{G}{L_f h_i}$ in \eqref{eq:di_lie} is recovered, per Proposition~\ref{prop:coupling} with $r = 2$, from the bivariate seed \eqref{eq:bivariate_seed}: for each input column $g_j = \col_j(G)$, the seed is $z_{g_j} = (x + g_j \epsilon_2) + \tilde f(x + g_j \epsilon_2)\, \epsilon_1 \in \BBD_{(1,1)}^4$, and the evaluation of the unmodified program of $h_i$ returns the coefficient of $\epsilon_1 \epsilon_2$ equal to the $j$-th entry of $2(q - o_i)^\rmT$, in agreement with \eqref{eq:di_lie}, without symbolic differentiation.
		
		The exponential-CBF condition (See Remark~\ref{rem:ECBF_extension}) for each $\delta$-active constraint, with linear $\alpha_1(s) = c_1 s$ and $\alpha_2(s) = c_2 s$, is
		\begin{equation}
			2(q - o_i)^\rmT\, u + 2\|v\|^2 + (c_1 + c_2)\,2(q - o_i)^\rmT v + c_1 c_2\, h_i(x) \geq 0. \label{eq:di_ecbf}
		\end{equation}
		At a non-smooth state lying between two obstacles $i$ and $j$, both constraints are active and \eqref{eq:di_ecbf} imposes two inequalities on $u$, with coefficient vectors $2(q - o_i)^\rmT$ and $2(q - o_j)^\rmT$. Because these point along the two distinct obstacle bearings, they are linearly independent, so the simultaneous QP is feasible: the two-dimensional input has exactly enough authority to satisfy both relative-degree-two constraints at once. This is the feature a scalar input lacks, and it is the relative-degree analogue of the simultaneous enforcement certified by Theorem~\ref{thm:forward_invariance}.
		
		\textit{Numerical simulation.} We take $p = 2$ obstacles at $o_1 = (0, 1)$ and $o_2 = (0, -1)$ with $r_1 = r_2 = 0.95$, leaving a narrow gap at the origin. The robot starts at $q(0) = (-3, -1)$ with $v(0) = 0$ and is commanded to $q_{\rm goal} = (3, -0.2)$ by the nominal proportional-derivative law $u_{\rm n} = 1.5\,(q_{\rm goal} - q) - 2.5\, v$. The ECBF gains are $c_1 = c_2 = 4$, the margin is $\delta = 0.15$, and \eqref{eq:di_dynamics} is integrated by explicit Euler over $T = 14\,\mathrm{s}$ at step $\Delta t = 10^{-3}\,\mathrm{s}$. Figure~\ref{fig:di_traj} shows the robot threading the gap, and Figure~\ref{fig:di_h_active} reports $h(x(t))$ and $|\SA_\delta(x(t))|$. The composite barrier remains nonnegative, with minimum value $0.05$ inside the gap, and the active set is a singleton on approach but grows to two for the $1.46\,\mathrm{s}$ during which the robot passes between the obstacles. Over that interval the filter enforces both relative-degree-two constraints simultaneously through its two-dimensional input, combining the truncated-dual extraction of Theorem~\ref{thm:higher_rel_deg}, the vertex selection of Theorem~\ref{thm:vertex_selection}, and the simultaneous enforcement of Theorem~\ref{thm:forward_invariance} in a single non-smooth event. \exampletriangle
	\end{example}
	
	\begin{figure}[ht!]
		\centering
		\includegraphics[width=0.80\columnwidth]{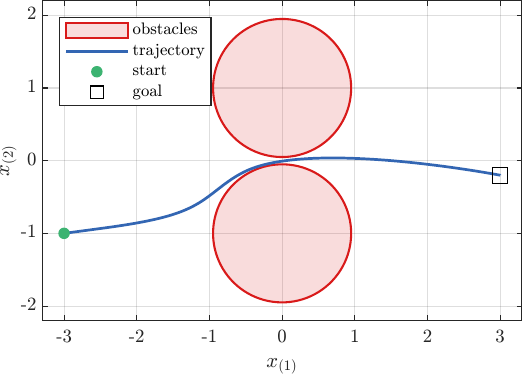}
		\caption{Closed-loop trajectory of Example~\ref{ex:doubleint}. The double integrator (blue) is commanded toward a goal $\square$. The nominal straight-line command drives the robot toward the obstacles (red). The relative-degree-two ECBF filter, acting on the two-dimensional acceleration, steers the robot safely through the gap.}
		\label{fig:di_traj}
	\end{figure}
	
	\begin{figure}[ht!]
		\centering
		\includegraphics[width=0.95\columnwidth]{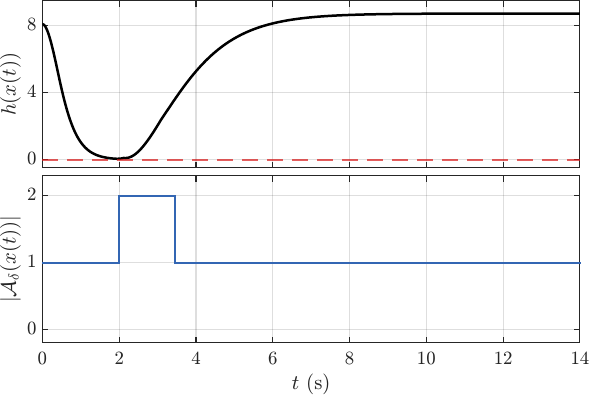}
		\caption{Composite barrier and active-set cardinality for Example~\ref{ex:doubleint}. Top: $h(x(t)) = \min_i h_i(x(t)) \geq 0$ throughout, dipping to $0.05$ inside the gap. Bottom: $|\SA_\delta(x(t))|$ equals two during the gap traversal, the non-smooth event at which two relative-degree-two constraints are enforced simultaneously.}
		\label{fig:di_h_active}
	\end{figure}
	
	\section{Discussion}
	\label{sec:discussion}
	
	\subsection{Comparison with Existing Approaches}
	
	Table~\ref{tab:comparison} compares the dual-algebraic approach with existing methods for non-smooth CBFs, with respect to the objectives \ref{P:invariance}--\ref{P:determinism} of this paper. Within this comparison, the dual-algebraic approach combines $O(C_h)$ worst-case time, deterministic computation, and exact subgradient extraction. Smooth surrogates attain $O(C_h)$ time and introduce approximation error; MIQP and hull-based methods are exact and deterministic, with computation times that depend on the state; sampling-based gradient estimates are not deterministic. Each row addresses the problem formulation for which it was developed; the comparison concerns their use for objectives \ref{P:invariance}--\ref{P:determinism}.
	
	\begin{table}[ht!]
		\centering
		\caption{Comparison of methods for non-smooth CBFs evaluated at a state $x$. Here $p$ denotes the number of smooth constraint functions $h_i$ composing $h = \min_i h_i$, $n$ is the state dimension, $C_h$ is the cost of evaluating $h$ in $\mathbb{R}$, and $|\mathcal{A}(x)|$ is the cardinality of the active set at $x$.}
		\label{tab:comparison}
		\footnotesize 
		\setlength{\tabcolsep}{3pt} 
		\renewcommand{\arraystretch}{1.2}
		\begin{tabular}{@{}p{3.2cm}lcc@{}} 
			\hline
			Method & Worst-case time & Det. & Exact \\
			\hline
			Smooth approx.\ \cite{lindemann2019control}           & $O(C_h)$                                    & Yes & No  \\
			Rand. smoothing \cite{duchi2012randomized}            & $O(C_h)$                                    & No  & No  \\
			Mixed-integer QP \cite{chen2018obstacle}    & $O(2^p)$                                    & Yes & Yes \\
			Clarke-hull enum.\ \cite{barber1996quickhull}      & $O(p + |\SA(x)|^{\lfloor n/2 \rfloor})$ & Yes & Yes \\
			\textbf{Dual-algebraic}                     & $\boldsymbol{O(C_h)}$                       & \textbf{Yes} & \textbf{Yes} \\
			\hline
		\end{tabular}
	\end{table}

	\subsection{Implementation Considerations}
	
	The dual-algebraic evaluation is implemented by storing each dual number as a pair $(a, b) \in \BBR^2$ and overloading the elementary arithmetic operations to follow Definition~\ref{def:dual_arith}. Languages with operator overloading (e.g., C++, Julia, Python) admit a transparent implementation; in C-only embedded environments, the dual-extended arithmetic can be inlined manually with no dynamic memory allocation, which is precisely the structural property required by safety standards such as DO-178C and ISO 26262. The composite-min evaluation \eqref{eq:dual_eval} reduces to $p - 1$ floating-point comparisons in addition to the $p$ dual evaluations of the individual $h_i$. When implemented in floating point, the comparison $\Re(z_1) \leq \Re(z_2)$ is the only branch in the procedure; near a real-part tie it may be sensitive to numerical noise, in which case the dual-part tie-break $\lexmin$ of Definition~\ref{def:lex_order} restores determinism, at the cost of a comparison no longer realized natively by the hardware floating-point comparator.

	\section{Conclusions and Future Work}
	\label{sec:conclusion}
	
	This paper presented a dual-algebraic framework for non-smooth control barrier functions defined as pointwise minima of smooth constraints, and showed that it meets the three objectives of Section~\ref{sec:problem}. Embedding the state and an arbitrary vector field into the dual-number ring $\BBD$ produces the value of the composite barrier and an exact Lie derivative from a single forward evaluation; the standard floating-point comparison, applied as $\minRe$, is a deterministic lexicographic selector that routes one vertex of the Clarke subdifferential to the QP at every state, including non-smooth points, which settles objectives~\ref{P:exactness} and \ref{P:determinism} (Theorem~\ref{thm:vertex_selection}, Proposition~\ref{prop:complexity}). Simultaneous enforcement of the $\delta$-active constraints then renders the safe set forward invariant, settling \ref{P:invariance} (Theorem~\ref{thm:forward_invariance}), whereas enforcing the selected vertex alone does not (Example~\ref{ex:counter}). The framework is closed under finite compositions of pointwise $\min$ and $\max$, both for gradient extraction (Theorem~\ref{thm:composition}) and for forward invariance under clause-wise enforcement (Theorem~\ref{thm:cnf_invariance}), and, through the truncated polynomial duals $\BBR[\epsilon]/(\epsilon^{r+1})$ and their bivariate extension, extends to systems of arbitrary relative degree, including extraction of the control coupling by elementary arithmetic (Theorem~\ref{thm:higher_rel_deg}, Proposition~\ref{prop:coupling}). The arithmetic overhead is a fixed constant factor of $C_h$, independent of $p$, $n$, $|\SA(x)|$, and the geometry of $\bd\SC$, which is the property required for hard real-time certification on embedded hardware. The three examples confirm the construction across relative degrees one and two and across polytopic, multi-agent, and nonlinear obstacle geometries.
	
	Several directions extend this work. The continuous-time guarantee of Theorem~\ref{thm:forward_invariance} carries to digital implementations once inter-sample drift is accounted for; a discrete-time dual-algebraic CBF inequality, in the spirit of \cite{breeden2021control}, would certify safety directly at the sample rate. Because the dual evaluation is exact and constant-cost regardless of how many constraints are simultaneously active, it pairs naturally with structure-exploiting QP solvers in cluttered environments, where the gradient extraction is no longer the bottleneck. Finally, the infinitesimal direction that carries the Lie derivative can equally carry a parameter sensitivity: seeding the dual part along a parameter-estimate direction would couple the framework with adaptive control barrier functions, allowing the safety certificate and the parameter update to share a single forward evaluation. Each of these directions preserves the defining feature of the present approach, that the safety-critical gradient information is produced exactly, deterministically, and in bounded time by elementary arithmetic alone.
	
	\bibliographystyle{IEEEtran}
	\bibliography{ref}
	

	\end{document}